\documentclass[12pt]{article}

\usepackage[T1]{fontenc}
\usepackage[american]{babel}
\usepackage[babel]{microtype}
\usepackage{cfr-lm}   

\usepackage[pdftex,%
    nomarginpar,%
    lmargin=1in,%
    rmargin=1in,%
    tmargin=1in,%
    bmargin=1in,%
]{geometry}                      

\usepackage[pdftex, final]{graphicx}  
\usepackage[pdftex, dvipsnames]{xcolor}  
\usepackage[margin=1cm]{caption}  
\usepackage{subcaption}           

\usepackage[leqno]{amsmath}   
\usepackage{amsthm}        
\usepackage{amssymb}

\usepackage{hyperref}     
\usepackage[capitalise, noabbrev]{cleveref}  
\hypersetup{
    colorlinks=true,
    linkcolor=NavyBlue,
    citecolor=NavyBlue
}              

\usepackage{thmtools, thm-restate}  
\newtheorem{theorem}{Theorem}[section]     
\newtheorem{lemma}[theorem]{Lemma}       

\newtheorem{proposition}[theorem]{Proposition}
\theoremstyle{definition}
\newtheorem{definition}[theorem]{Definition}

\newtheorem*{theorem*}{Theorem}      

\usepackage{chngcntr}  
\counterwithin*{equation}{subsection}
\numberwithin{equation}{section}              

\usepackage{enumitem}          
\setlist{noitemsep}           

\usepackage[backend=bibtex, url=false, doi=false, isbn=false, maxnames=10, style=numeric]{biblatex}
\title{Characterizing Hyperbolicity in Graphs}

\author{Faisal Leo Quraishi\footnotemark[1]}
\date{}

\begin{document}

\maketitle

\footnotetext[1]{Department of Mathematics and Statistics, University of Nevada, Reno, NV 89557}

\begin{abstract}

Gromov's delta-hyperbolicity, the classical measure of how tree-like a metric space is, works well on spaces with unbounded diameter but behaves poorly on finite graphs, where it depends primarily on diameter rather than geometry. We introduce a function relating Gromov's delta of a quadruple to its diameter and use it to define a normalized invariant that characterizes the hyperbolicity of any finite graph, taking values between zero (trees) and one (large lattice graphs). We derive a closed-form formula for this function on the hyperbolic plane. Using this formula, we give an alternate proof of the best constant of Gromov's delta-hyperbolicity for the hyperbolic plane. We also give the first theoretical proof of the optimal constant for the scaled Gromov four-point condition under diameter scaling, previously known only from numerical computations.

\end{abstract}

\section*{Introduction}

The study of hyperbolic geometry originated from analyzing the properties of the hyperbolic plane. However, many spaces share properties with the hyperbolic plane \parencite{Gromov1987}. These properties are known as hyperbolicity, and roughly correspond to how "tree-like" a space is  \cite{hamann2011treelikenesshyperbolicspaces}. Hyperbolicity has been found in many real-world systems, from the structure of the internet to clusters of earthquakes to biological data \cite{4445694} \cite{Boguna2010} \cite{Karla2020} \cite{Zhou2018}. Moreover, research into complex networks has shown that many of their defining properties emerge from an underlying hyperbolicity \cite{Krioukov2010}. Understanding this hyperbolicity yields many effective results. For example, complex networks generally have more efficient embedding into hyperbolic space than into Euclidean space \cite{Clough_2016} \cite{Boguna2010} \cite{chamberlain2017neuralembeddingsgraphshyperbolic}. Moreover, transport algorithms are optimized when they use hyperbolic coordinates instead of Euclidean ones \cite{Blasius2020}. 

Complex networks are examples of graphs. However, not all graphs exhibit an underlying hyperbolicity \cite{narayan2012lackhyperbolicityasymptoticerdosrenyi} \cite{gugelmann2012randomhyperbolicgraphsdegree}. Therefore, detecting hyperbolicity in graphs has become a topic of interest \cite{ex1}\cite{ex2}\cite{ex3}\cite{ex4}\cite{ex5}. Classically, this is done via Gromov's $\delta$-hyperbolicity on a metric space, where the distance between any two nodes is defined to be the length of the shortest path between them \cite{chen2013hyperbolicitysmallworldtreelikerandom} \cite{Borassi2015} \cite{fournier2015computinggromovhyperbolicitydiscrete}. In Gromov's $\delta$-hyperbolicity, the maximal value of $\delta(\square)$ (see \cref{def:Def1}) over all quadruples $\square$ in the metric space characterizes the degree of hyperbolicity. However, this method of using $\delta$
to detect geometry was developed for spaces with unbounded distances and does not account for the diameter of a finite metric space. In response to this, scaled approaches scale $\delta(\square)$ by a factor related to the size of $\square$ \cite{ex1} \cite{Jonckheere2011}. These methods give a bound to distinguish when a graph possesses hyperbolicity. However, these methods do not determine the degree of this hyperbolicity. 

Motivated by the need to characterize the degree of hyperbolicity in a graph, we develop in \cref{sec:def} a new method of evaluating the hyperbolicity of any general metric space. 
We do so by further analyzing the relationship between the value of $\delta(\square)$ and the diameter $D(\square)$ (see \cref{def:Def2}) of the quadruple $\square$. This relationship was studied empirically on the hyperbolic plane in \cite{Karla2020}. The limit of this relationship as the diameter approaches infinity was studied in \cite{Nica2016}, where they give a theoretical proof of the maximal $\delta(\square)$ (see \cref{sec:3.2} for details). The limit of this relationship as the diameter approaches $0$ was studied in \cite{Jonckheere2011}, where they give a theoretical framework and prove a numerical result for the maximal value of $\delta(\square)/D(\square)$ (see \cref{sec:3.3} for details). For the purposes of this paper, we name this relationship explicitly as:
\begin{restatable*}{definition}{GDef}
Let $X$ be a metric space with metric $d : X \times X \to \mathbb{R}$. We define $\text{Dist}_X$ to be the set of possible distances in $X$:
\[ \text{Dist}_X = \{d(a,b) : a,b \in X\} \subset [0,\infty).\]
 We define $\Gamma_X : \text{Dist}_X \to \mathbb{R}$ by letting $\Gamma_X(x)$ be the  supremum of $\delta(\square)$ over all quadruples $\square$ with diameter $x$: 
\[\Gamma_X(x) = \sup\limits_{\square \subset X : D(\square) = x} \delta(\square).\]
\end{restatable*} 
While the entire relationship $\Gamma_X$ is relevant to the geometry of a graph, it is useful to extract a numerical constant to quantify the degree of hyperbolicity in the graph. Thus, we characterize the hyperbolicity of a graph as follows:
\begin{restatable*}{definition}{MDef}
Given a finite connected weighted graph $G$ with the path metric, let 
\[s(G) = \frac{4\sum_{i \in \text{Dist}_G} \Gamma_G(i)}{D(G)(D(G)+1)}\]
where $D(G)$ is the diameter of $G$. 
\end{restatable*}
Our model has the following advantages: we have defined $s(G)$ such that $s(G)$ will always lie on the interval $[0,1]$. If $G$ is a tree, which models maximal hyperbolicity, then $s(G)$ will be zero. If $G$ is a lattice graph, our model of minimal hyperbolicity, then $s(G)$ approaches $1$ as the size of the lattice grows; specifically $s(L_n) = \frac{2n}{2n+1} \to 1$ (see \cref{sec:def}). This is notable because the classical maximal $\delta$ depends entirely on the diameter of the lattice graph, with $\delta(L_n) = n$ growing without bound as $n \to \infty$ \cite{chen2013hyperbolicitysmallworldtreelikerandom}, while our characterization's dependence on $n$ vanishes at rate $O(1/n)$. Moreover, we use our Main Theorem to motivate that $G$ has greater hyperbolicity than another graph $G'$ if $s(G) < s(G')$. Thus, we propose $s(G)$ as a characterization of hyperbolicity in a graph. 

Our main theorem finds the formula for $\Gamma_X$ when $X$ is the hyperbolic plane $\mathbb{H}^2$. 
\begin{restatable*}{theorem}{Main}\label{ther:Main}
Given the hyperbolic plane $\mathbb{H}^2$, we have
\[\Gamma_{\mathbb{H}^2}(x)  = x - \cosh^{-1}\left(\cosh^2\left(\frac{x}{2}\right)\right).\]
We also have $\Gamma_{\mathbb{H}^2}'(x) \geq 0$ and $\Gamma_{\mathbb{H}^2}''(x) \leq 0$ for $x \in (0,\infty)$. 
\end{restatable*}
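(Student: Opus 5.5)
The plan is to prove the formula as a lower bound from an explicit quadruple plus a matching upper bound, and then verify the derivative claims by direct computation. I use the four-point form of \cref{def:Def1}. For $\square=\{a,b,c,d\}$, order the three pair sums $d(a,c)+d(b,d)$, $d(a,b)+d(c,d)$, $d(a,d)+d(b,c)$ as $S_1\ge S_2\ge S_3$. Then $\delta(\square)=(S_1-S_2)/2$.

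\emph{Lower bound.} Fix $x>0$. Take two geodesic segments of length $x$ that bisect each other perpendicularly at a point $p$, and let their endpoints be $a,c$ and $b,d$. All four sides have the same length $s$. Applying the hyperbolic Pythagorean theorem to the right triangle with legs $x/2,x/2$ gives $\cosh s=\cosh^2(x/2)$. Since $s\le x$, the quadruple has diameter $x$, $S_1=2x$ and $S_2=S_3=2s$. Hence
\[\delta(\square)=x-\cosh^{-1}\left(\cosh^2\left(\tfrac{x}{2}\right)\right),\]
so $\Gamma_{\mathbb{H}^2}(x)$ is at least this value.

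\emph{Upper bound.} Take any quadruple of diameter $x$ and label it so that $S_1=d(a,c)+d(b,d)$. If the geodesics $ac$ and $bd$ do not cross, I will argue that the quadrilateral inequality gives $S_1\le S_2$ or $S_1\le S_3$, so $\delta=0$. In the crossing case, let the geodesics meet at $p$ with angle $\theta$. Write $d(a,c)=u_1+u_2$ and $d(b,d)=v_1+v_2$, with $u_i,v_j\le x$ and $u_1+u_2\le x$, $v_1+v_2\le x$. Each side is a third side of a triangle at $p$, so it is given by the law of cosines:
\[\cosh d(a,b)=\cosh u_1\cosh v_1-\sinh u_1\sinh v_1\cos\theta ,\]
and similarly for the other sides, with $\theta$ and $\pi-\theta$ alternating around $p$. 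Since $S_2\ge (S_2+S_3)/2$, it suffices to bound
\[\tfrac12\Big(u_1+u_2+v_1+v_2-\tfrac12\textstyle\sum_{\text{sides}} d\Big).\]
I then show this is maximized when $\theta=\pi/2$ and $u_i=v_j=x/2$. First, averaging the $\cos\theta$ and $-\cos\theta$ terms of opposite-angle triangles, using convexity of $t\mapsto\cosh^{-1}(A-Bt)$ in $t$, should reduce to $\theta=\pi/2$. Next, for a right triangle $g(u,v)=\cosh^{-1}(\cosh u\cosh v)$ and the quantity $u+v-g(u,v)$ is increasing in each variable, so we may push $u_1+u_2=v_1+v_2=x$. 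Finally, a concavity or symmetrization argument in the splits $u_1,u_2$ and $v_1,v_2$ should give the balanced point $x/2$.

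I expect this optimization to be the main obstacle. The convexity in $\theta$ and the concavity/symmetry in the splits both need careful verification. The pairing needs particular care, because $S_2$ and $S_3$ are not separately symmetric, and the diameter constraint also caps the side lengths. I would first try Lagrange-style calculus on the function of $(u_1,u_2,v_1,v_2,\theta)$. If that fails, I would use the Gauss--Bonnet area bound to control the defect $S_1-S_2$.

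\emph{Derivatives.} Write $c=\cosh(x/2)$. Then $\sqrt{\cosh^4(x/2)-1}=\sinh(x/2)\sqrt{c^2+1}$, and differentiating gives
\[\Gamma_{\mathbb{H}^2}'(x)=1-\frac{c}{\sqrt{c^2+1}}>0 .\]
The map $c\mapsto c/\sqrt{c^2+1}$ is increasing and $c$ increases in $x$, so $\Gamma_{\mathbb{H}^2}'$ is decreasing. Hence $\Gamma_{\mathbb{H}^2}''\le 0$ on $(0,\infty)$.
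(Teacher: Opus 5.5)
Your lower bound and your derivative computation $\Gamma_{\mathbb{H}^2}'(x)=1-c/\sqrt{c^2+1}$ are correct; the latter is more direct than the paper's route through the $\gamma_\theta$ lemmas. The upper bound, however, breaks in two places.

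\textbf{Crossing case.} The averaging step $S_2\ge\tfrac12(S_2+S_3)$ discards exactly the information that matters, and the averaged quantity is \emph{not} maximized at the symmetric point. Put $a,b,c,d$ at distance $x/2$ from $p$ with $\angle apb=\theta\to0^+$. Then the diameter is $x$ and $S_1=2x$, while $d(a,b)+d(c,d)\to0$ and $d(a,d)+d(b,c)\to2x$. So your quantity $\tfrac12\big(S_1-\tfrac12(S_2+S_3)\big)$ tends to $x/2$. This exceeds $x-\cosh^{-1}(\cosh^2(x/2))$ for every $x>0$, because $\cosh^2(x/2)>\cosh(x/2)$; meanwhile the true $\delta$ tends to $0$. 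This is consistent with $t\mapsto\cosh^{-1}(A-Bt)$ being concave, not convex: its second derivative is $-B^2(A-Bt)\big((A-Bt)^2-1\big)^{-3/2}<0$. So for equal arms the four-side sum is \emph{largest} at $\theta=\pi/2$, and no Lagrange or Gauss--Bonnet refinement can rescue this bound.

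The missing idea is not to average at all. Swap $b\leftrightarrow d$ if necessary so that the vertical angles $\angle apb=\angle cpd=\theta$ are at least $\pi/2$, and use only $S_2\ge d(a,b)+d(c,d)$. Then three steps finish the argument:
\begin{enumerate}
\item \eqref{eq:HLaw2} with $\cosh(u_1-v_1)\ge1$ gives $d(a,b)\ge\gamma_\theta\big(\tfrac{u_1+v_1}{2}\big)$, and likewise for $d(c,d)$.
\item Convexity of $\gamma_\theta$ then gives $d(a,b)+d(c,d)\ge2\gamma_\theta(S_1/4)$.
\item $\cos\theta\le0$ gives $\gamma_\theta\ge\gamma_{\pi/2}$.
\end{enumerate}
Hence $\delta(\square)\le S_1/2-\gamma_{\pi/2}(S_1/4)=f(S_1/2)\le f(x)$. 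Here $f(x)=x-\cosh^{-1}(\cosh^2(x/2))$, which is increasing, and $S_1\le 2x$.

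\textbf{Non-crossing case.} This case does not give $\delta=0$. The quadrilateral inequality covers four points in convex position. When one point lies inside the triangle formed by the other three, no pair of opposite segments crosses, yet $\delta$ can be positive.

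For example, let $p$ be the midpoint of $ac$ with $d(p,a)=d(p,c)=r>0$. Put $b$ and $d$ on the perpendicular at $p$, on the same side, at distances $h$ and $h/2$. Then $d(a,c)+d(b,d)=2r+h/2$. By the hyperbolic Pythagorean theorem, both other sums equal $\cosh^{-1}(\cosh r\cosh h)+\cosh^{-1}(\cosh r\cosh\tfrac h2)=2r+O(h^2)$. So $\delta>0$ for small $h$.

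The paper closes this case with a reduction you also need. Call the interior point $d$ and label the other three so that $S_1=d(a,c)+d(b,d)$. Then slide $d$ along the ray from $b$ to the point $d'$ where it meets $ac$. This increases $d(a,c)+d(b,d)$ by exactly $d(d,d')$, and increases each other sum by at most $d(d,d')$ by the triangle inequality. It also leaves the diameter unchanged. So $\delta$ does not decrease, and you land in the crossing case.
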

In \cite{Karla2020}, the authors considered the maximum value of $\delta(\square)$ as a function of diameter $D(\square)$, in other words, $\Gamma_{\mathbb{H}^2}(x)$. They checked the value of this function for large samples of quadruples on the hyperbolic plane with various curvatures. In \cref{fig:KFig}, we compare our expression for $\Gamma_{\mathbb{H}^2}(x)$ in the Main Theorem to their statistical results. 
\begin{figure}[h]
\centering
\includegraphics[width=.475\textwidth]{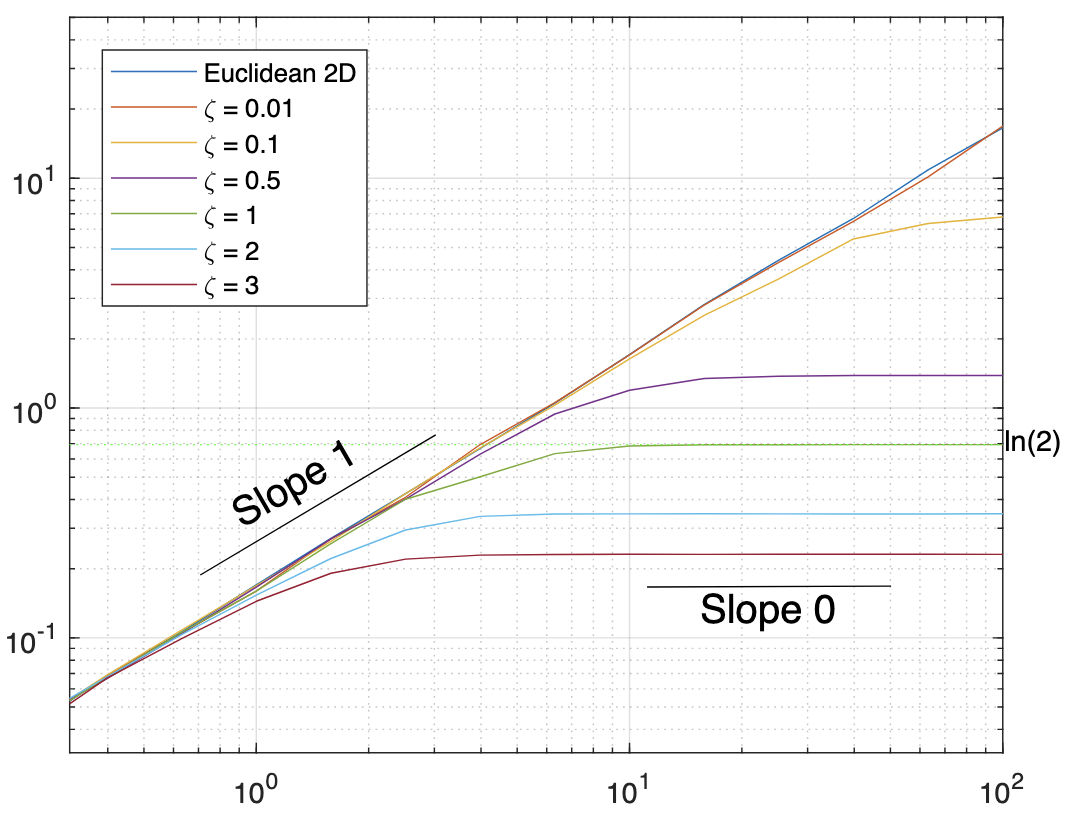} \includegraphics[width=.475\textwidth]{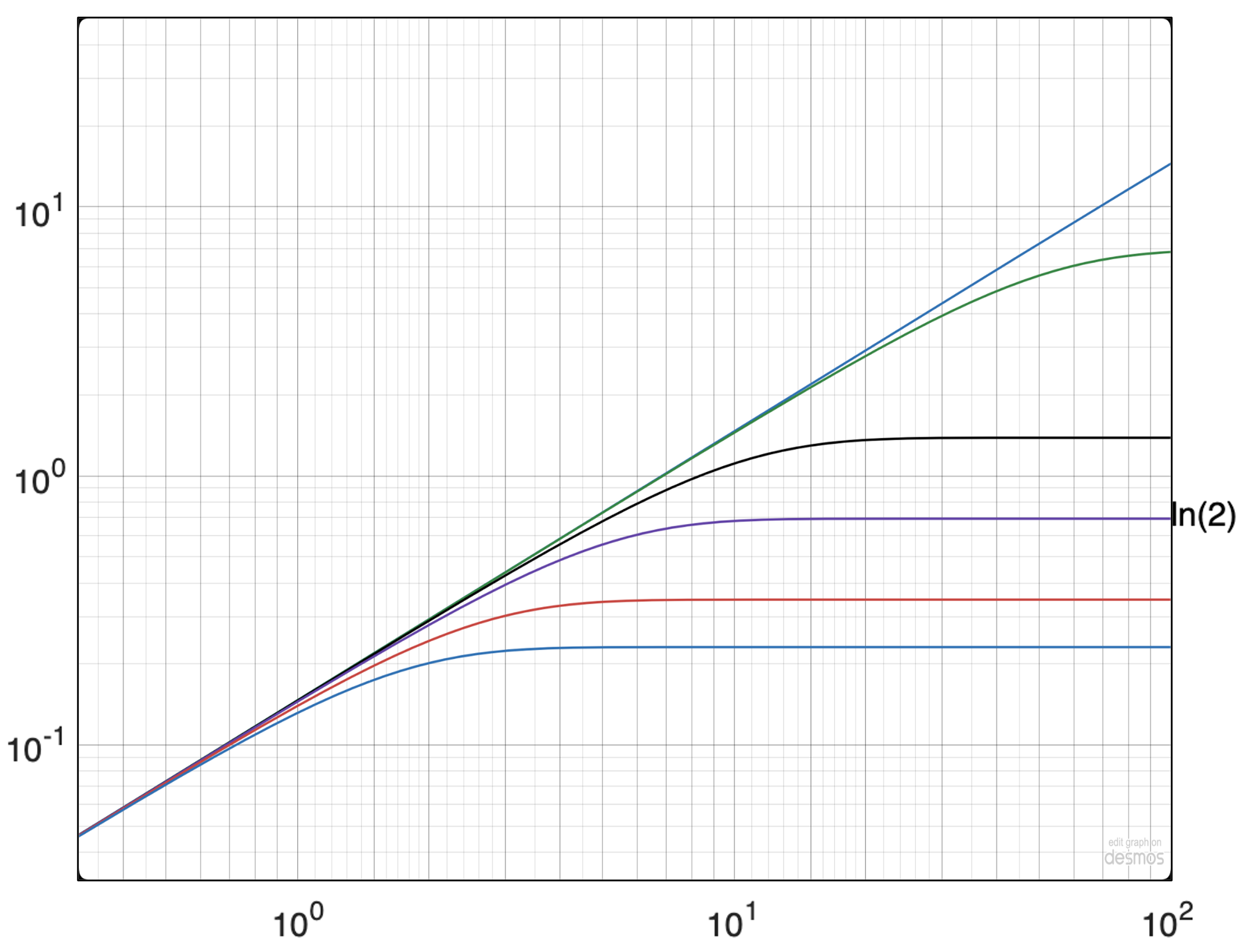}
\caption{The left picture is taken from \cite{Karla2020}. The value of $\zeta$ determines the degree to which the plane is negatively curved. On the right, is a log-log plot of the equation found in our Main Theorem (\cref{ther:Main}). }
 \label{fig:KFig}
\end{figure}
\begin{restatable*}{corollary}{CorA}\label{cor:CorA}
The limit of $\Gamma_{\mathbb{H}^2}$ as $x$ goes to infinity is given by
\[\lim\limits_{x \to \infty} \Gamma_{\mathbb{H}^2}(x) = \ln 2\]
\end{restatable*}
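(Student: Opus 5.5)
This follows directly from the closed form in \cref{ther:Main}, which we take as given. The plan is to rewrite $\Gamma_{\mathbb{H}^2}(x) = x - \cosh^{-1}(\cosh^2(x/2))$ in a form where the large-$x$ behaviour is visible. We use the logarithmic expression $\cosh^{-1}(y) = \ln\bigl(y + \sqrt{y^2-1}\bigr)$ for $y \geq 1$. Setting $y = \cosh^2(x/2)$ gives
\[
\Gamma_{\mathbb{H}^2}(x) = x - \ln\Bigl(\cosh^2(x/2) + \sqrt{\cosh^4(x/2) - 1}\Bigr) = -\ln\Bigl(e^{-x}\cosh^2(x/2) + e^{-x}\sqrt{\cosh^4(x/2)-1}\Bigr).
\]

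Next we compute the limit of the argument of the logarithm. Since $\cosh^2(x/2) = (e^{x} + 2 + e^{-x})/4$, we have $e^{-x}\cosh^2(x/2) = (1 + 2e^{-x} + e^{-2x})/4 \to 1/4$. For the square root term,
\[
e^{-x}\sqrt{\cosh^4(x/2)-1} = \sqrt{\bigl(e^{-x}\cosh^2(x/2)\bigr)^2 - e^{-2x}} \to \sqrt{1/16 - 0} = 1/4.
\]
The argument of the logarithm therefore tends to $1/2$. By continuity of $\ln$ on $(0,\infty)$, $\Gamma_{\mathbb{H}^2}(x) \to -\ln(1/2) = \ln 2$.

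There is no real obstacle. The only point needing care is pulling $e^{-x}$ inside the square root, which is valid because $e^{-x} > 0$, so $e^{-x} = \sqrt{e^{-2x}}$. As a consistency check, \cref{ther:Main} gives $\Gamma'_{\mathbb{H}^2} \geq 0$, so $\Gamma_{\mathbb{H}^2}$ increases monotonically to $\ln 2$. Hence $\ln 2$ is also $\sup_x \Gamma_{\mathbb{H}^2}(x)$, which recovers the best Gromov constant for $\mathbb{H}^2$ mentioned in the abstract.
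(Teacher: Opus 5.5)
Your proof is correct and takes essentially the same route as the paper: both compute the limit directly from the closed form in \cref{ther:Main} by passing to exponentials and logarithms and factoring out $e^{x}$. The only difference is cosmetic. You use the explicit formula $\cosh^{-1}(y)=\ln\bigl(y+\sqrt{y^2-1}\bigr)$, whereas the paper solves $\cosh y=\cosh^2(x/2)$ implicitly and then needs $y\ge 0$ to discard the $e^{-(x+y)}$ term.
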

While the theory of $\delta$-hyperbolic metric spaces has existed since the 1980s, a proof of the minimal value of $\delta$ for the hyperbolic plane was only first given in 2016 \cite{Nica2016}. In \cref{sec:3.2}, we use \cref{cor:CorA} to give an alternative proof that the best constant for $\delta$-hyperbolicity on the hyperbolic plane $\mathbb{H}^2$ is $\ln 2$.
\begin{restatable*}{corollary}{CorB}\label{cor:Cor2}
The limit of $\Gamma_{\mathbb{H}^2}'$ as $x$ goes to $0$ is given by
\[\lim\limits_{x \to 0^+} \Gamma_{\mathbb{H}^2}'(x) = \frac{\sqrt{2}-1}{\sqrt{2}}\]
\end{restatable*}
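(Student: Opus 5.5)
Given the closed form in \cref{ther:Main}, Corollary~\ref{cor:Cor2} is a one-variable limit computation, and I will do it by differentiating the formula explicitly. Write $f(x)=\cosh^2(x/2)$, so that $\Gamma_{\mathbb{H}^2}(x)=x-\cosh^{-1}(f(x))$. For $x>0$ we have $f(x)>1$, and there $\cosh^{-1}$ is differentiable with derivative $1/\sqrt{u^2-1}$. Also $f'(x)=\cosh(x/2)\sinh(x/2)=\tfrac12\sinh x$. The chain rule then gives
\[
\Gamma_{\mathbb{H}^2}'(x)=1-\frac{\tfrac12\sinh x}{\sqrt{\cosh^4(x/2)-1}} ,\qquad x>0 .
\]

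The only real step is the limit of the quotient as $x\to 0^+$, which is of the form $0/0$. I factor the radicand as
\[
\cosh^4(x/2)-1=\bigl(\cosh^2(x/2)-1\bigr)\bigl(\cosh^2(x/2)+1\bigr)=\sinh^2(x/2)\bigl(\cosh^2(x/2)+1\bigr).
\]
For $x>0$, $\sinh(x/2)>0$, so its square root is $\sinh(x/2)\sqrt{\cosh^2(x/2)+1}$. I also write $\tfrac12\sinh x=\sinh(x/2)\cosh(x/2)$. The factor $\sinh(x/2)$ then cancels exactly, leaving
\[
\Gamma_{\mathbb{H}^2}'(x)=1-\frac{\cosh(x/2)}{\sqrt{\cosh^2(x/2)+1}} .
\]
This expression is continuous at $0$, so letting $x\to0^+$ gives
\[
\lim_{x\to0^+}\Gamma_{\mathbb{H}^2}'(x)=1-\frac{1}{\sqrt2}=\frac{\sqrt2-1}{\sqrt2}.
\]

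The one point needing care is the square root of $\sinh^2(x/2)$, which equals $|\sinh(x/2)|$. This is why the limit is one-sided: for $x>0$ the absolute value can be dropped. The simplified form also gives an independent check of the sign claim $\Gamma_{\mathbb{H}^2}'\ge0$ in \cref{ther:Main}. As a further sanity check, the same formula tends to $1-1=0$ as $x\to\infty$, which is consistent with Corollary~\ref{cor:CorA}.
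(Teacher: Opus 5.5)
Your computation is correct and is essentially the paper's argument. The paper writes $\Gamma_{\mathbb{H}^2}(x)=x-\gamma_{\pi/2}(x/2)$ and invokes \cref{lem:TLemma2}, whose derivative comes from the $\sinh^{-1}$ form \cref{eq:GLaw3}. That derivative is already nonsingular at $0$, so the paper needs no $0/0$ cancellation. Your factorization of $\cosh^4(x/2)-1$ yields the same expression $1-\cosh(x/2)/\sqrt{\cosh^2(x/2)+1}$, so the two routes differ only in which closed form of $\gamma_{\pi/2}$ gets differentiated.
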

In \cite{Jonckheere2011}'s Major Result, it is shown that 
\[\sup_{\square \subset \mathbb{H}^2} \frac{\delta(\square)}{D(\square)} = \sup_{\square \subset \mathbb{E}^2} \frac{\delta(\square)}{D(\square)} = B\]
for a constant $B$ and one of various definitions of $D(\square)$. They use this result to define a scaled Gromov four-point condition for characterizing hyperbolicity in finite graphs. They were able to give theoretical proofs for the value of $B$ for every definition of $D(\square)$, except when $D(\square)$ is defined as in \cref{def:Def2}. In this case, they give a numerical result. In \cref{sec:3.3}, we use \cref{cor:Cor2} to, when $D(\square)$ is the diameter, give a theoretical proof that $B = (\sqrt{2}-1)/\sqrt{2}$.

\section{Background}

\subsection{The Hyperbolic Plane}

The hyperbolic plane $\mathbb{H}^2$ is the complete, simply connected, surface with constant curvature equal to -1. An explicit model of the hyperbolic plane is the Poincaré disk. The Poincaré disk is the open disk $\{ (x,y) \in \mathbb{R}^2 : \sqrt{x^2 + y^2} < 1\}$ in $\mathbb{R}^2$ with the Riemannian metric $g = 4(dx^2 + dy^2)/(1-x^2-y^2)^2$. The notions of line, line segment, distance, and angle are all defined in terms of this Riemannian metric. Once these definitions are made, the Poincaré disk also satisfies the axioms of neutral geometry as outlined in \cite{Harvey2015}. While needed to rigorously define and derive the geometric properties of the hyperbolic plane, the Poincaré disk, Riemannian geometry, and axiomatic geometry are not needed for our discussion and will not be referenced. Instead, we will reference the following derived geometric properties of the hyperbolic plane:

\begin{itemize}
\item[1.] Consider two points $a,b$ in the hyperbolic plane. There is a unique line passing through $a$ and $b$, denoted as $\overline{ab}$. There is a unique line segment from $a$ to $b$,  denoted as $ab$. The distance between $a$ and $b$ is defined as the length of the line segment $ab$, and is denoted by $|ab|$. 

\item[2.] Three points $a,b,c$ in the hyperbolic plane obey the triangle inequality:
\[|ac| \leq  |ab| + |bc|.\]
Moreover, they obey the stronger hyperbolic law of cosines:
\[
\cosh|ab| = \cosh\left(|ac|\right)\cosh\left(|bc|\right) - \sinh\left(|ac|\right)\sinh\left(|bc|\right)\cos \angle acb\]
where $\angle acb$ is the angle. Applying the hyperbolic trig identity
\[\cosh(x\pm y) = \cosh x \cosh y \pm \sinh x \sinh y.\] 
to the hyperbolic law of cosines gives the following alternate formulation:
\begin{equation} \label{eq:HLaw2}   \cosh|ab| = \cosh\left(|ac|+|bc|\right) \frac{1-\cos \angle acb}{2} + \cosh\left(|ac|-|bc|\right) \frac{1+\cos \angle acb}{2}.
\end{equation}
The points also obey the hyperbolic law of sines:
\[\frac{\sin \angle acb}{\sinh(|ab|)}=\frac{\sin \angle abc}{\sinh(|ac|)}=
\frac{\sin \angle bac}{\sinh(|bc|)}.\]

\item[3.] Four points $a,b,c,d$ in the hyperbolic plane obey the following two theorems:
\begin{theorem}[\cite{Harvey2015} Theorem 2.6] \label{Harvey2.6}
 If $d$ is an interior point of the angle $\angle bac$, then the line $\overline{ad}$ intersects the line segment ${bc}$. 
\end{theorem}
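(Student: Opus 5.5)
This is the crossbar theorem of neutral geometry, so I will prove it from plane separation and Pasch's axiom alone, without any hyperbolic trigonometry. The plan is to build an auxiliary triangle that the line $\overline{ad}$ is forced to cross, and then use side-of-line arguments to rule out every exit except through the segment $bc$.

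\textbf{Construction and Pasch.} Choose a point $b'$ on the line $\overline{ab}$ with $a$ strictly between $b$ and $b'$. Consider the triangle $b'bc$. It is nondegenerate, since $c \notin \overline{ab}$ because $\angle bac$ is a genuine angle. The line $\overline{ad}$ passes through $a$, which is an interior point of the side $bb'$. It does not pass through $b$, $b'$ or $c$: since $d$ is interior to $\angle bac$, it lies on neither $\overline{ab}$ nor $\overline{ac}$, so $\overline{ad}$ differs from both of these lines. By Pasch's axiom, $\overline{ad}$ therefore meets either the open segment $bc$ or the open segment $b'c$. It remains to exclude a point $p \in \overline{ad}$ lying strictly between $b'$ and $c$.

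\textbf{Excluding the side $b'c$.} Suppose such a $p$ exists.
\begin{enumerate}
\item Since $b'$ lies on $\overline{ab}$, every point strictly between $b'$ and $c$ is on the same side of $\overline{ab}$ as $c$. So $p$ is on the $c$-side of $\overline{ab}$.
\item Split $\overline{ad}\setminus\{a\}$ into the ray $\overrightarrow{ad}$ and its opposite ray. Because $d$ is interior to $\angle bac$, $d$ is on the $c$-side of $\overline{ab}$. Hence the open ray $\overrightarrow{ad}$ lies on the $c$-side of $\overline{ab}$, while the opposite open ray lies on the other side. Therefore $p$ must lie on the ray $\overrightarrow{ad}$.
\item The open ray $\overrightarrow{ad}$ lies on the same side of $\overline{ac}$ as $d$, which is the $b$-side, again because $d$ is interior to the angle. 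So $p$ is on the $b$-side of $\overline{ac}$.
\item On the other hand, $p$ is strictly between $b'$ and $c$ with $c\in\overline{ac}$, so $p$ is on the $b'$-side of $\overline{ac}$.
\item Finally, $a \in \overline{ac}$ lies strictly between $b$ and $b'$, so $b$ and $b'$ are on opposite sides of $\overline{ac}$.
\end{enumerate}
Steps 3, 4 and 5 contradict one another, so no such $p$ exists. Hence $\overline{ad}$ meets the segment $bc$; in fact the same reasoning shows the meeting point lies on the ray $\overrightarrow{ad}$.

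The main care needed is in this last step: tracking which half-planes the two rays from $a$ occupy relative to both lines $\overline{ab}$ and $\overline{ac}$. Everything else is a direct application of the plane separation property and Pasch's axiom, both of which hold in the hyperbolic plane as a model of neutral geometry.
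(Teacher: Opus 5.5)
Your argument is correct. It is the standard proof of the crossbar theorem: extend $\overline{ab}$ past $a$ to $b'$, apply Pasch to $\triangle b'bc$, then use plane separation with respect to $\overline{ab}$ and $\overline{ac}$ to rule out the side $b'c$. It even yields the stronger conclusion that the ray $\overrightarrow{ad}$ meets $bc$. The paper gives no proof of its own and simply cites Harvey's neutral-geometry text, so your self-contained derivation from Pasch and plane separation is the kind of argument that citation stands in for.
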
 
\begin{theorem}[\cite{Harvey2015} Lemma 2.8] \label{Harvey2.8} 
If $c$ and $d$ are both on the same side of $\overline{ab}$, then either $c$ is an interior point of $\angle abd$ or $d$ is an interior point of $\angle abc$. 
\end{theorem}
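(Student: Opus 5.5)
The plan is to argue directly from the plane separation axiom of neutral geometry, using the standard definition of the interior of an angle: $x$ is an interior point of $\angle abd$ iff $x$ lies on the same side of $\overline{ba}$ as $d$ and on the same side of $\overline{bd}$ as $a$. By hypothesis, $c$ and $d$ already lie on the same side of $\overline{ab}$. So for either conclusion only the condition relative to the second side of the angle needs checking. I will split into cases according to where $c$ sits relative to the line $\overline{bd}$.

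First, the degenerate case $c \in \overline{bd}$. Since $c$ and $d$ are on the same side of $\overline{ab}$, the point $b$ does not lie between them, so $c$ lies on the ray from $b$ through $d$. The two rays $\overrightarrow{bc}$ and $\overrightarrow{bd}$ therefore coincide, and the statement is read with this coincidence in mind; this is the usual convention in \cite{Harvey2015}, where distinct rays are implicitly assumed. Now suppose $c \notin \overline{bd}$. If $c$ is on the same side of $\overline{bd}$ as $a$, then together with the hypothesis this gives exactly that $c$ is an interior point of $\angle abd$, and we are done.

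The remaining case is the main step. Here $c$ and $a$ are on opposite sides of $\overline{bd}$, so the segment $ac$ meets $\overline{bd}$ at a point $p$ strictly between $a$ and $c$. Every point of the open segment $ac$ other than $a$ lies on the same side of $\overline{ab}$ as $c$, hence on the same side as $d$. Therefore $p$ lies on the open ray $\overrightarrow{bd}$ and not on the opposite ray, so $\overrightarrow{bp}=\overrightarrow{bd}$.

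Next, any point strictly between $a$ and $c$ is an interior point of $\angle abc$. It is on $c$'s side of $\overline{ab}$ and on $a$'s side of $\overline{bc}$, since the segment from it to $a$ (respectively to $c$) does not cross the relevant line. In particular $p$ is interior to $\angle abc$. Finally, the interior of an angle is a union of open rays from the vertex: all points of $\overrightarrow{bp}$ other than $b$ lie on $p$'s side of both $\overline{ba}$ and $\overline{bc}$, since the segment from $p$ to such a point avoids $b$. Applying this to $d\in\overrightarrow{bp}$ shows that $d$ is an interior point of $\angle abc$.

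The only delicate point is the sidedness bookkeeping in this last case: showing that $p$ lands on the ray through $d$ and not on its opposite ray. That is exactly where the hypothesis that $c$ and $d$ are on the same side of $\overline{ab}$ is used.
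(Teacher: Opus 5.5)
The paper gives no proof of this statement. It is quoted from \cite{Harvey2015} as a background fact, so there is no argument in the paper to compare yours with.

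Judged on its own, your proof of the case $c\notin\overline{bd}$ is correct and self-contained. Plane separation, rather than the Crossbar Theorem, produces $p\in ac\cap\overline{bd}$. The fact that the open segment $ac$ lies on $c$'s side of $\overline{ab}$, together with the hypothesis, puts $p$ on $\overrightarrow{bd}$ rather than its opposite ray. Then the open segment $ac$ lies in the interior of $\angle abc$, and that interior is a union of open rays from $b$, so interiority carries over from $p$ to $d$.

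There are two minor slips:
\begin{enumerate}
\item You tacitly use $a\notin\overline{bd}$; this is immediate from $d\notin\overline{ab}$, but should be said.
\item Your ``respectively'' is reversed. For $q$ strictly between $a$ and $c$, it is the segment $qc$ that misses $\overline{ab}$ and the segment $qa$ that misses $\overline{bc}$.
\end{enumerate}

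The degenerate case needs a sharper treatment than ``the statement is read with this coincidence in mind.'' If $c\in\overrightarrow{bd}$, then $c$ lies on the side $\overline{bd}$ of $\angle abd$, and $d$ lies on the side $\overline{bc}$ of $\angle abc$. With the definition of interior you are using, which excludes points on the sides, neither alternative holds. So the statement as written in the paper is false in that case.

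You should say so plainly and state what you actually prove. That is the lemma under the added hypothesis $c\notin\overline{bd}$ (equivalently $\overrightarrow{bc}\neq\overrightarrow{bd}$), or the lemma with a third alternative, ``$c$ and $d$ lie on a common ray from $b$.'' Do not attribute an unverified ``implicit convention'' to \cite{Harvey2015}.

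This matters downstream. In the proof of \cref{lem:PLem1}, the paper applies the statement to an arbitrary quadruple with $c,d$ on the same side of $\overline{ab}$, without excluding collinear $b,c,d$. That configuration therefore has to be handled separately there.
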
 
where "on the same side" and "interior point" are given rigorous definitions in \cite{Harvey2015} as Definitions 1.3 and 2.2 respectively. 
\end{itemize}

\subsection{Gromov's $\delta$-Hyperbolic Metric Spaces}

Unlike surfaces, graphs do not have a notion of curvature. However, they do have a notion of distance. In this paper, all graphs are assumed to be connected, weighted, and finite (if a graph is unweighted, we assign all edges the weight one). The distance $|ab|$ between these two points $a,b$ in the graph is defined as the sum of the edge weights in the shortest path between them. This notion of distance gives a graph the structure of a \textit{metric space}.

\begin{definition}
A metric space is a set $X$ equipped with a distance function $d : X \times X \to \mathbb{R}$ that obeys the following conditions: 
\begin{enumerate}
\item The distance $d(a,b)$ between two points $a,b \in X$ is nonnegative, and equal to zero iff $a = b$. \\
\item The distance $d(a,b)$ from $a$ to $b$ is always the same as the distance $d(b,a)$ from $b$ to $a$. \\
\item For three points $a,b,c \in X$, the triangle inequality holds: $d(a,b) \leq d(a,c) + d(c,b)$. 
\end{enumerate}
\end{definition}
Note that the hyperbolic plane $\mathbb{H}^2$ is a metric space, and the Euclidean/Cartesian plane $\mathbb{E}^2$ is also a metric space.

The structure of a metric space allows us to generalize the notion of hyperbolic geometry to that of a $\delta$-hyperbolic metric space: Let $X$ be a metric space. We will refer to four points in a metric space as a quadruple and will sometimes use the notation $\square \subset X$ to denote a quadruple in a metric space $X$. 
\begin{figure}[h]
\centering
\includegraphics[width=.45\textwidth]{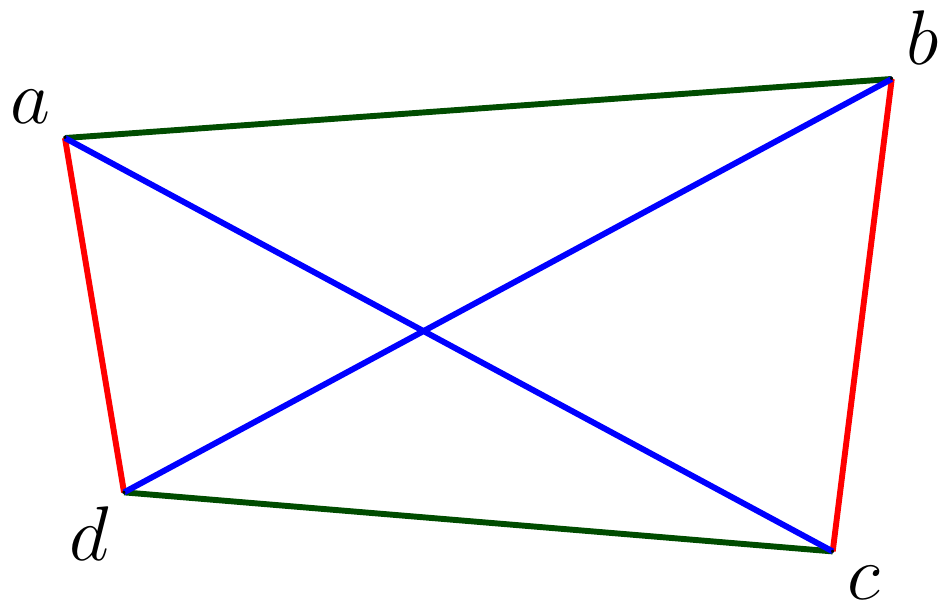} 
\caption{A quadruple with 
S = |ad| + |bc|, M =  |ab| + |cd|, and L = |ac| + |bd|. }
\label{fig:GQuad}
\end{figure}

\begin{definition} \label{def:Def1} Let $\square = \{a,b,c,d\} \subset X$ be a quadruple in a metric space $X$. We define the delta $\delta(\square)$ of $\square$ as follows: There are three ways to divide the four points $a,b,c,d$ into pairs. Take the sums of the distances in each of these pairs: \begin{align}
&d(a,b) + d(c,d) \\
 &d(a,c) + d(b,d) \\
 & d(a,d) + d(b,c).
\end{align} 
We call these sums the pairwise distance sums of $\square$. Then $\delta(\square)$ is half the difference between the two largest sums. In other words, label the sums $S,M,L$ such that $S \leq M \leq L$ (See \cref{fig:GQuad}). Then 
\[\delta(\square) = \frac{L - M}{2}.\]
\end{definition}

Classically, the supremum of $\delta$ over all quadruples characterizes the degree of hyperbolicity in the space \cite{Gromov1987}. 
\begin{definition}
Let $X$ be a metric space and let $\delta \in [0,\infty)$. If $\delta(\square) \leq \delta$ for all $\square \subset X$ (this is called the Gromov four-point condition), then $X$ is called a $\delta$-hyperbolic metric space. 
\end{definition}

\begin{figure}[h]
\centering
\includegraphics[width=.75\textwidth]{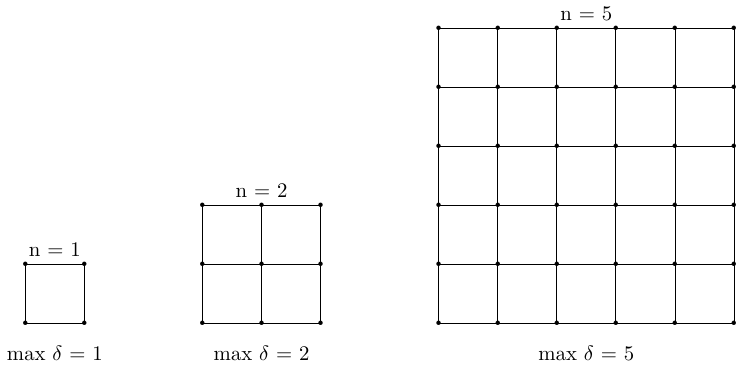}
\caption{Lattice graphs for $n = 1,2,5$.}
\label{fig:Lattice}
\end{figure}

Since connected graphs are metric spaces with the shortest path metric, the notion of $\delta$-hyperbolicity seemingly gives us a function from the set of graphs to the real line that describes the global hyperbolicity of the graph. However, $\delta$-hyperbolicity is designed primarily for metric spaces where the distance is unbounded. For finite graphs, which have bounds on their distance, $\delta$-hyperbolicity is not a well-defined characterization. For example, a lattice graph for some $n$ will have $\delta = n$ (See \cref{fig:Lattice}) \cite{chen2013hyperbolicitysmallworldtreelikerandom}. However, the lattice graphs are known to have quasi-Euclidean geometry and the Euclidean plane has unbounded $\delta$. We want a characterization that can identify the lattice graph as Euclidean independent of $n$. 

The limitation comes from applying $\delta$, which considers the maximum over all quadruples, to metric spaces with finite diameter. In this case, $\delta$ depends primarily on the diameter of the metric space and not on its geometry.  In order to overcome this hurdle, we need to consider not only the $\delta$ of a quadruple, but also its diameter. 
\begin{definition} \label{def:Def2} Let $X$ be a metric space. The diameter $D(\square)$ of a quadruple $\square \subset X$ is defined as the supremum of distances between any two of its points: $D(\square) = \sup\{d(a,b) : a,b \in \square\}$.
\end{definition}

\section{$\Gamma_X$ on Metric Spaces}

Our primary tool for studying the hyperbolicity of a metric space $X$ will be a function defined on the set of possible distances in $X$. This function, called $\Gamma_X$, will measure how large $\delta(\square)$ can be when $\square$ has diameter $x$, for every $x \in \text{Dist}_X$. 

\GDef
\begin{proposition}\label{prop:maxg}
For a metric space $X$, we have 
\[\Gamma_{X}(x) \leq \frac{x}{2}.\]
\end{proposition}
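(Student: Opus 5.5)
It suffices to show that every quadruple $\square=\{a,b,c,d\}\subset X$ with diameter $D(\square)=x$ satisfies $\delta(\square)\le x/2$. Taking the supremum over all such quadruples then gives $\Gamma_X(x)\le x/2$. The plan is to bound $L-M$ by $x$ directly from the triangle inequality.

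First, label the points so that $L=d(a,c)+d(b,d)$ is the largest pairwise distance sum. The candidate for $M$ will be one of the other two sums, $d(a,b)+d(c,d)$ or $d(a,d)+d(b,c)$. Since $M$ is the larger of these two, it suffices to show that their sum is at least $2L-2x$. Indeed, $2M\ge \bigl(d(a,b)+d(c,d)\bigr)+\bigl(d(a,d)+d(b,c)\bigr)$, so that bound would give $M\ge L-x$.

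Next, apply the triangle inequality along paths through the fourth points:
\[
d(a,c)\le d(a,b)+d(b,c),\qquad d(a,c)\le d(a,d)+d(d,c),
\]
\[
d(b,d)\le d(b,a)+d(a,d),\qquad d(b,d)\le d(b,c)+d(c,d).
\]
Adding all four inequalities gives $2L\le 2\bigl(d(a,b)+d(c,d)+d(a,d)+d(b,c)\bigr)$, which is too weak. A cleaner route is to use only two of them, for example
\[
d(a,c)\le d(a,b)+d(b,c),\qquad d(b,d)\le d(b,a)+d(a,d).
\]
Adding these yields
\[
L\le \bigl(d(a,b)+d(b,c)\bigr)+\bigl(d(a,b)+d(a,d)\bigr)\le \bigl(d(a,d)+d(b,c)\bigr)+2d(a,b).
\]
This shows $L-M\le L-\bigl(d(a,d)+d(b,c)\bigr)\le 2d(a,b)\le 2x$, which gives only $\delta\le x$. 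To obtain the factor $1/2$, I will instead use the more direct bounds $L\le 2x$, since each of the two distances in $L$ is at most $x$, and $M\ge L/2$ in some sense. The cleaner argument uses one triangle inequality per distance in $L$, together with the fact that $M$ is at least each of the other two sums:
\[
d(a,c)\le d(a,b)+d(b,c),\qquad d(b,d)\le d(b,c)+d(c,d).
\]
The first inequality gives $d(a,c)-d(b,c)\le d(a,b)$, and the second gives $d(b,d)-d(b,c)\le d(c,d)$. Adding them, $L-2d(b,c)\le d(a,b)+d(c,d)\le M$. Hence $L-M\le 2d(b,c)$, but this bound is no better than $2x$.

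The step I expect to be the main obstacle is therefore choosing the right combination of inequalities to reach $x$ rather than $2x$. The direct approach is to use $M\ge S$ together with $L\le M+S$. Combining the two triangle-inequality sums above gives
\[
2L\le \bigl(d(a,b)+d(c,d)\bigr)+\bigl(d(a,d)+d(b,c)\bigr)+\bigl(d(a,b)+d(b,c)+d(c,d)+d(d,a)\bigr)-\dots,
\]
so the standard fact $L\le M+S$ should be established first. It follows because $d(a,c)\le d(a,b)+d(b,c)$ and $d(b,d)\le d(b,a)+d(a,d)$; more symmetrically, each of $L$'s two distances is bounded by a path through the other pair, and averaging these bounds gives $L\le M+S$. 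Given $L\le M+S$, we get $L-M\le S$. We also have $L-M\le L\le 2x$. If $L-M>x$, then $S>x$ and hence $M\ge S>x$, while $L\le 2x$, which gives $L-M<x$, a contradiction. Therefore $L-M\le x$, so $\delta(\square)=(L-M)/2\le x/2$.
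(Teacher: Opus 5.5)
Your final argument is correct and is essentially the paper's proof: both derive $L \le M+S$ from the four triangle inequalities, then combine it with $S \le M$ and $L \le 2x$ to get $L-M \le x$. The paper does this directly as $2L-2M \le L \le 2x$, and your contradiction argument is the same computation. One correction to the write-up: the sum of all four inequalities, which you first dismissed as ``too weak,'' is exactly $2L \le 2(M+S)$, i.e.\ the key inequality. By contrast, the two-inequality justification you cite first only yields $L \le 2d(a,b)+d(b,c)+d(a,d)$ and does not give $L \le M+S$ on its own.
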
 
\begin{proof}
Consider a quadruple $\square$ labeled $a,b,c,d$ such that $D(\square) = x$. Suppose, without loss of generality, that
\[S = d(a,b) + d(c,d) \leq M = d(a,c) + d(b,d) \leq L = d(a,d) + d(b,c),\] see \cref{def:Def1}. By the triangle inequality, we have 
\[\begin{aligned} 
d(a,d) \leq  d(a,b) + d(b,d), \\
d(a,d) \leq  d(c,d) + d(a,c), 
\end{aligned} \quad 
 \begin{aligned} 
d(b,c) \leq d(a,b) + d(a,c), \\
d(b,c) \leq d(c,d) + d(b,d). \\
\end{aligned}
\]
This implies that
\[\begin{alignedat}{1}
2L &= 2d(a,d) + 2d(b,c)  \\
&\leq  |ab| + |bd| +  |cd| + |ac| + |ab| + |ac| + |cd| + |bd|  \\
&= 2(M+S) \\
&\leq 4M .\\
\end{alignedat}\]
Recall that the diameter $x$ is the largest pairwise distance in $\square$. Then $L = d(a,d) + d(b,c) \leq 2x$. It follows that 
\[2L - 2M \leq 2L - L = L \leq 2x\]
which implies  $\delta(\square) = (L-M)/2 \leq x/2$. 
\end{proof}

\begin{proposition} \label{prop:0quad}
Let $X$ be a metric space and let $a,b,c,d \in X$ such that $\delta(a,b,c,d) \neq 0$. Then $a,b,c,d$ are distinct points in $X$. 
\end{proposition}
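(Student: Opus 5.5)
Argue by contraposition: if two of the four points coincide, we show $\delta(a,b,c,d)=0$. The definition of $\delta$ is symmetric under relabeling the four points, since relabeling only permutes the three pairwise distance sums. So we may assume without loss of generality that $a=b$.

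With $a=b$ we have $d(a,b)=0$, and the three pairwise distance sums become
\[
d(a,b)+d(c,d)=d(c,d),\qquad d(a,c)+d(b,d)=d(a,c)+d(a,d),\qquad d(a,d)+d(b,c)=d(a,d)+d(a,c).
\]
The second and third sums are equal. By the triangle inequality, $d(c,d)\leq d(c,a)+d(a,d)$, so the first sum is at most the common value of the other two.

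Hence the largest sum $L$ and the middle sum $M$ both equal $d(a,c)+d(a,d)$. This gives $\delta=(L-M)/2=0$, contradicting the hypothesis, so the points are distinct. (Even without the triangle inequality, two of the three sums coincide. If the first sum were strictly larger it would be $L$, and then $M$ would be the common value, which would make $\delta$ nonzero. So the triangle inequality is genuinely needed here.)

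There is no real obstacle. The only care needed is justifying the reduction to $a=b$ by symmetry, and identifying which sums are $L$ and $M$ after the coincidence.
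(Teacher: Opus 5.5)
Your proof is correct and follows essentially the same route as the paper's. You argue by contraposition, reduce to $a=b$, observe that the two sums $d(a,c)+d(a,d)$ coincide, and use the triangle inequality $d(c,d)\le d(a,c)+d(a,d)$ to conclude $L=M$ and hence $\delta=0$.
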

\begin{proof}
Suppose without loss of generality that $a = b$. Then by the triangle inequality, \[|ab| + |cd| = |aa| + |cd| = |cd| \leq |ac| + |ad| = |ac| + |bd| =  |ad| + |bc|.\]
It follows that $S = |ab| + |cd|$ and $M = L = |ac| + |bd| =  |ad| + |bc|$. Which implies that $\delta(\{a,b,c,d\}) = (L-M)/2 = 0$, proving the contrapositive. 
\end{proof}

\section{The hyperbolic plane}

In this section we will study the behavior of $\Gamma_{\mathbb{H}^2}$. We will begin with some preliminary concepts. Then we will move on to the proof of our main theorem. We will conclude with an alternate proof of the best constant of $\delta$-hyperbolicity for the hyperbolic plane $\mathbb{H}^2$ and find the explicit upper bound for the scaled Gromov four-point condition in the diameter scaling case. 

\begin{figure}[h]
  \begin{subfigure}{0.45\textwidth}
    \includegraphics[width=\linewidth]{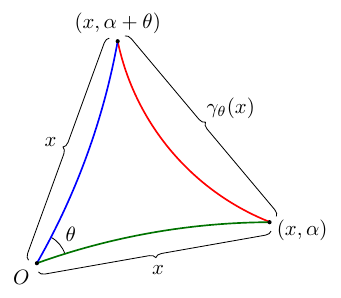}
\caption{} \label{fig:gfunction}
  \end{subfigure}
  \hspace*{\fill}   
  \begin{subfigure}{0.45\textwidth}
    \includegraphics[width=\linewidth]{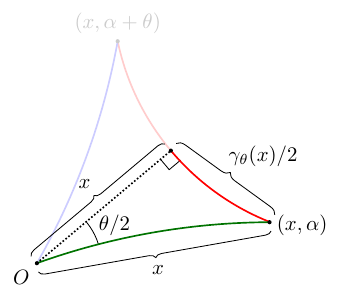}
\caption{} \label{fig:gfunction2}
  \end{subfigure}
 \caption{
The triangle in \cref{fig:gfunction2} is obtained by taking the angle bisector of $\theta$ in the isosceles triangle shown in \cref{fig:gfunction} (see Sections~4.8 (p.~44) and~7.12 (p.~71) of \cite{Harvey2015}).
} 
\label{fig:Gfunction}
\end{figure}

We may define polar coordinates on $\mathbb{H}^2$ where the radius is the distance from some reference point $O$ (the origin), and the angle is determined relative to some ray emanating from $O$. Because the hyperbolic plane is strongly isotropic, the choice of $O$ and the emanating ray can be made arbitrarily. We first define a function that measures the distance between two points that are equidistant from the origin, in terms of that distance and the angle the two points make with the origin:
\begin{definition}
For $\theta \in [0,2\pi)$, define $\gamma_\theta : [0,\infty) \to \mathbb{R}$ by $\gamma_\theta(x) = |(x,\alpha), (x,\alpha + \theta)|$
where $(x,\alpha)$ and $ (x,\alpha + \theta)$ are given in polar coordinates (see \cref{fig:Gfunction}).
\end{definition}
We will use this function throughout our study of $\Gamma$ on the hyperbolic plane. By applying the hyperbolic law of cosines and our alternative form (see \cref{eq:HLaw2}), to the triangle in \cref{fig:gfunction}, we have
\begin{align}
\gamma_\theta(x) &=\label{eq:GLaw1} \cosh^{-1}\left( \cosh^2\left(x\right)-\sinh^2\left(x\right)\cos(\theta)\right) \\
&=\label{eq:GLaw2}  \cosh^{-1}\left(\cosh\left(2x\right)\frac{1-\cos(\theta)}{2}
+ \frac{1+\cos(\theta)}{2}\right) .
\end{align}
By applying the hyperbolic law of sines to the triangle in \cref{fig:gfunction2}, we have
\begin{equation}\label{eq:GLaw3} 
\gamma_\theta(x) = 2\sinh^{-1}\left(\sin\left(\frac{\theta}{2}\right)\sinh\left(x\right)\right)
\end{equation}
Note that $\gamma_\theta$ is continuous on its domain $[0,\infty)$ and smooth on $(0,\infty)$.  We will frequently use the following two technical lemmas on $\gamma_\theta$:
\begin{restatable}{lemma}{TLemA} \label{lem:TLemma1}
$\gamma'_\theta(x) \geq 0$ and $\gamma''_\theta(x) \geq 0$ for $x \in (0,\infty)$. 
\end{restatable}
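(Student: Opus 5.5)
We work directly from the closed form \eqref{eq:GLaw3}, $\gamma_\theta(x) = 2\sinh^{-1}\left(s\sinh x\right)$ with $s = \sin(\theta/2) \in [0,1]$ for $\theta \in [0,2\pi)$. If $s = 0$ (that is, $\theta = 0$), then $\gamma_\theta \equiv 0$ and both inequalities hold trivially. So we assume $s \in (0,1]$ and differentiate twice by the chain rule.

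For the first derivative, using $(\sinh^{-1})'(u) = (1+u^2)^{-1/2}$, we get
\[\gamma_\theta'(x) = \frac{2s\cosh x}{\sqrt{1+s^2\sinh^2 x}}.\]
This is manifestly nonnegative, indeed positive for $x>0$, since $\cosh x>0$ and $s>0$.

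For the second derivative, write $\gamma'_\theta(x) = 2s\,\cosh x\,(1+s^2\sinh^2x)^{-1/2}$ and apply the product rule:
\[\gamma_\theta''(x) = 2s\,\frac{\sinh x\,(1+s^2\sinh^2 x) - s^2\sinh x\cosh^2 x}{(1+s^2\sinh^2 x)^{3/2}}.\]
Substituting $\cosh^2 x = 1+\sinh^2 x$, the numerator simplifies to $\sinh x\,(1-s^2)$. Hence
\[\gamma_\theta''(x) = \frac{2s(1-s^2)\sinh x}{(1+s^2\sinh^2 x)^{3/2}} = \frac{2\sin(\theta/2)\cos^2(\theta/2)\sinh x}{(1+\sin^2(\theta/2)\sinh^2 x)^{3/2}}.\]
This is nonnegative for $x>0$, because $\sin(\theta/2)\ge 0$ on $[0,2\pi)$, $\cos^2(\theta/2) \ge 0$, and $\sinh x>0$.

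The only step requiring care is the algebraic simplification of the numerator of $\gamma_\theta''$; the sign of the final expression is then immediate. Equality $\gamma''_\theta \equiv 0$ occurs exactly when $\theta=\pi$, where $\gamma_\pi(x)=2x$. As a consistency check, we could verify $\gamma_\theta'$ against \eqref{eq:GLaw1} instead. However, \eqref{eq:GLaw3} avoids the square root of $\cosh^2-1$ and is the cleaner route.
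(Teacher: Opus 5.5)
Your proof is correct and follows the paper's route: differentiate \cref{eq:GLaw3} twice to get $\gamma_\theta'(x) = 2S\cosh x/\sqrt{1+S^2\sinh^2 x}$ and $\gamma_\theta''(x) = 2S(1-S^2)\sinh x/(1+S^2\sinh^2 x)^{3/2}$ with $S=\sin(\theta/2)\in[0,1]$, then read off the signs. Your separate treatment of $\theta=0$ and your remark on equality at $\theta=\pi$ are harmless extras, since these formulas already cover $S=0$.
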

\begin{proof}
Let $S=\sin(\theta/2)$. Then differentiating \cref{eq:GLaw3} gives us 
\[
\gamma_\theta'(x)=\frac{2S\cosh(x)}{\sqrt{1+S^2\sinh^2(x)}} \quad
\gamma_\theta''(x)=
\frac{2S(1-S^2)\sinh(x)}{\left(1+S^2\sinh^2(x)\right)^{3/2}}.
\]

Since $\theta \in [0,2\pi)$, it follows that $\sin(\theta/2) \geq 0$. Moreover, $\cosh(x)$ is always greater than $0$. It follows that $\gamma_\theta'(x) \geq 0$ for $x \in (0,\infty)$. Since $1-\sin^2(\theta/2) \geq 0$, and $\sinh(x)$ is greater than $0$ for $x \in (0,\infty)$, it follows that $\gamma_\theta''(x) \geq 0$.
\end{proof}

\begin{restatable}{lemma}{TLemB}  \label{lem:TLemma2}
$\lim\limits_{x\to 0^+} \gamma'_{\pi/2}(x) = \sqrt{2}$ and $\lim\limits_{x\to \infty} \gamma'_{\pi/2}(x) = 2$. 
\end{restatable}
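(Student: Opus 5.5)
The plan is to substitute $\theta=\pi/2$ into the closed-form derivative computed in the proof of \cref{lem:TLemma1}, and then take the two limits directly. With $S=\sin(\pi/4)=\frac{\sqrt{2}}{2}$ we have $S^2=\frac12$, so
\[
\gamma_{\pi/2}'(x)=\frac{2S\cosh(x)}{\sqrt{1+S^2\sinh^2(x)}}=\frac{\sqrt{2}\,\cosh(x)}{\sqrt{1+\tfrac12\sinh^2(x)}}.
\]
This expression is valid for $x\in(0,\infty)$ and, as a formula, extends continuously to $x=0$. That reduces both claims to elementary limits of hyperbolic functions.

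\emph{Limit at $0^+$.} The numerator $\sqrt{2}\cosh(x)$ tends to $\sqrt{2}$. Inside the square root of the denominator, $\sinh(x)\to 0$, so the denominator tends to $\sqrt{1}=1$. Since the expression is a quotient of continuous functions with nonzero denominator at $x=0$, the limit is $\sqrt{2}$.

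\emph{Limit at $\infty$.} I will divide the numerator and denominator by $\sinh(x)$, which is positive for $x>0$. This gives
\[
\gamma_{\pi/2}'(x)=\frac{\sqrt{2}\,\coth(x)}{\sqrt{\sinh^{-2}(x)+\tfrac12}}.
\]
As $x\to\infty$ we have $\coth(x)\to 1$ and $1/\sinh^2(x)\to 0$, so the limit is $\sqrt{2}/\sqrt{1/2}=2$.

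There is no real obstacle here. The only point needing care is that the derivative formula from \cref{lem:TLemma1} was derived on $(0,\infty)$, so the value at $0$ must be treated as a one-sided limit of the formula rather than as $\gamma'_{\pi/2}(0)$. Continuity of the closed-form expression handles this. As a sanity check, \cref{eq:GLaw3} gives $\gamma_{\pi/2}(x)\approx\sqrt{2}\,x$ for small $x$, which is the Euclidean diagonal of a right isosceles triangle. For large $x$, $\gamma_{\pi/2}(x)\approx 2x-\mathrm{const}$, consistent with the tree-like behavior of $\mathbb{H}^2$. Both agree with the computed limits.
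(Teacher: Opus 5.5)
Your proof is correct and follows essentially the same route as the paper: substitute $\theta=\pi/2$ into the derivative formula from \cref{lem:TLemma1} and evaluate both limits directly. The only difference is cosmetic: for the limit at $\infty$ you divide numerator and denominator by $\sinh(x)$, while the paper divides by $\cosh(x)$ and uses $\tanh(x)\to 1$.
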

\begin{proof}
Plugging $\theta = \pi/2$ into $\gamma_\theta'(x)$ and dividing numerator and denominator by $\cosh(x)$ gives us 
\[\gamma_{\pi/2}'(x)=\frac{\sqrt{2}\cosh(x)}{\sqrt{1+\frac{\sinh^2(x)}{2}}} =
\frac{\sqrt{2}}{\sqrt{\left(\frac{1}{\cosh(x)}\right)^2+\frac{\tanh^2(x)}{2}}}  \]
Note that $\lim\limits_{x\to 0^+} \cosh(x) = 1$ and $\lim\limits_{x\to 0^+} \sinh(x) = 0$. It follows from the first equality that $\lim\limits_{x\to 0^+} \gamma'_{\pi/2}(x) = \sqrt{2}$. Note that  $\lim\limits_{x\to \infty} 1/\cosh(x) = 0$ and $\lim\limits_{x\to \infty} \tanh(x) = 1$. It follows from the second equality that $\lim\limits_{x\to \infty} \gamma'_{\pi/2}(x) = \sqrt{2}/\sqrt{1/2} = 2$. 
\end{proof}

\subsection{Main Theorem}

\begin{figure}[h]
\centering
\includegraphics[width=0.65\textwidth]{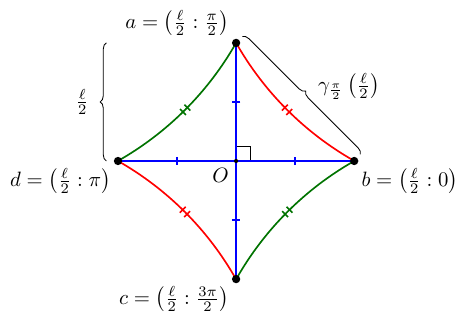}
\caption{The quadruple $\square_\ell$ in the hyperbolic plane. Note that $|ac| = |bd| = \ell$ and $|ab| = |bc| = |cd| = |ad| = \gamma_{\pi/2}(\ell/2)$. It follows from the triangle inequality that
$\gamma_{\pi/2}(\ell/2) = |ab| \leq |Oa| + |Ob| = \ell/2 + \ell/2 = \ell$.
Therefore $D(\square_\ell) = \ell$,
 $L(\square_{\ell}) = 2\ell$, and 
 $M(\square_{\ell}) = 2\gamma_{\pi/2}(\ell/2)$.
 }
\label{fig:LQuad}
\end{figure}

This section will be dedicated to the proof of our main theorem:
\Main
\begin{proof}
Define $f : [0,\infty) \to \mathbb{R}$ by $f(x) = x - \cosh^{-1}\left(\cosh^2(x/2)\right)$ and note that 
\[f(x)  = x - \gamma_{\pi/2}\left(\frac{x}{2}\right) = \frac{2x - 2 \gamma_{\pi/2}\left(\frac{x}{2}\right)}{2} = \frac{L(\square_x) - M(\square_x)}{2} = \delta(\square_x)\]
where the quadruple $\square_x$ with diameter $D(\square_x) = x$ is given by setting $\ell = x$ in the family
\[
\square_\ell=
\left\{
\left(\frac{\ell}{2},0\right),
\left(\frac{\ell}{2},\frac{\pi}{2}\right),
\left(\frac{\ell}{2},\pi\right),
\left(\frac{\ell}{2},\frac{3\pi}{2}\right)
\right\},
\qquad \ell\geq 0,
\]
see \cref{fig:LQuad}. As in the definition of $\gamma_\theta$, $\square_\ell$ is given in polar coordinates. 

We will complete the proof by showing that $f'(x) \geq 0$, $f''(x) \leq 0$ for $x \in (0,\infty)$, and that $\Gamma_{\mathbb{H}^2}(x) = f(x)$ for $x \in [0,\infty)$. 

Define $g : (0,\infty) \to \mathbb{R}$ by $g(x) = \gamma_{\pi/2}(x/2)$, so that $f(x) = x - g(x)$. By \cref{lem:TLemma1}, $g''(x) = \gamma_{\pi/2}''(x/2)/4 \geq 0$. So $g'(x)$ is increasing and therefore $g'(x) \leq \lim_{x \to \infty} g'(x)$. By \cref{lem:TLemma2}, $\lim_{x \to \infty} \gamma'_{\pi/2}(x) = 2$. Thus, 
\[g'(x) \leq \lim_{x \to \infty} g'(x) = \frac{1}{2} \lim_{x \to \infty} \gamma_{\pi/2}'\left(\frac{x}{2}\right) = \frac{2}{2} = 1.\]
It follows that $f'(x) = 1 - g'(x) \geq 0$. Moreover, by 
\cref{lem:TLemma1}, \[f''(x) = -g''(x) = -\frac{\gamma''_{\pi/2}\left(\frac{x}{2}\right)}{4} \leq 0.\] 

Recall that $\Gamma_{\mathbb{H}^2}(x)$ is the supremum of $\delta(\square)$ over all quadruples $\square$ with diameter $x$. By \cref{lem:PLem1}, we may restrict to the subset $S$ of quadruples satisfying condition 1 of \cref{lem:PLem1}. By \cref{lem:MLemma}, every quadruple $\square \in S$ satisfies $\delta(\square) \leq \delta(\square_{L(\square)/2}) = f(L(\square)/2)$. Thus, 
\[\Gamma_{\mathbb{H}^2}(x) = \sup_{\square \in S} \delta(\square) \leq \sup_{\square \in S} f\left(\frac{L(\square)}{2}\right).\]
The sum of disjoint distances $L(\square)$ is bounded by twice the longest distance $D(\square)$. Since $\square \in S$ have their diameter bounded by $x$, the maximal value of $L(\square)/2$ in the range of the supremum is $2x/2 = x$. Note that $\square_x$ is an example of a quadruple in $S$ with diameter $x$ (see \cref{fig:LQuad}). 
Since $f$ is increasing, it follows that 
\[\Gamma_{\mathbb{H}^2}(x) = \sup_{\square \in S} \delta(\square) = \sup_{\square \in S} f\left(\frac{L(\square)}{2}\right) = f(x).\]

\begin{figure}[h]
  \begin{subfigure}{0.225\textwidth}
    \includegraphics[width=\linewidth]{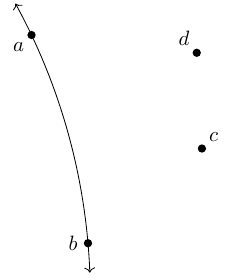}
\caption{} \label{fig:Lemma3.1}
  \end{subfigure}
  \hspace*{\fill}   
  \begin{subfigure}{0.225\textwidth}
    \includegraphics[width=\linewidth]{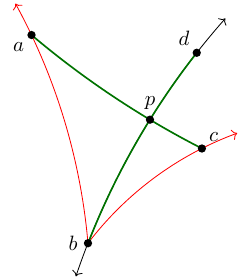}
\caption{} \label{fig:Lemma3.2}
  \end{subfigure}
  \hspace*{\fill}   
  \begin{subfigure}{0.225\textwidth}
    \includegraphics[width=\linewidth]{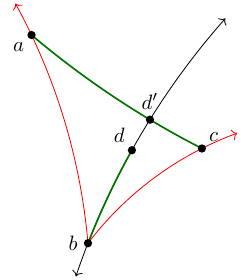}
\caption{} \label{fig:Lemma3.3}
  \end{subfigure}
  \hspace*{\fill}   
\caption{
\textbf{(a)} $c$ and $d$ lie on the same side of $\overline{ab}$ and $d$ lies in the interior of $\angle abc$. 
\textbf{(b)} Case 1: $d$ does not lie on the same side of $p$ as $b$, therefore $ac$ and $bd$ intersect. 
\textbf{(c)} Case 2: $d$ lies on the same side of $p$ as $b$, therefore $ac$ and $bd$ do not intersect. However, $\overline{bd}$ intersects $ac$ at a point $d'$.  
} 
\end{figure}
\end{proof}
We will now prove \cref{lem:PLem1,lem:MLemma}. 
 \begin{lemma}\label{lem:PLem1}
For any quadruple $\square \subset \mathbb{H}^2$ with diameter $\ell$, there exists a quadruple $\square' \subset \mathbb{H}^2$ with diameter $\ell$ satisfying the following conditions:
\begin{enumerate}
    \item  A pair of line segments joining two disjoint pairs of vertices in $\square'$ intersect. In other words, $\square'$ can be labeled $\{a,b,c,d\}$ such that $ac$ and $bd$ intersect. 
    \item  $\delta(\square) \leq \delta(\square').$
\end{enumerate}
\end{lemma}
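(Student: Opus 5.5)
The plan is a case split on the position of the fourth point relative to the other three. If the four points are in "convex position", no change is needed. Otherwise one point lies in the closed triangle spanned by the other three, and I will push it outward to a side of that triangle without decreasing $\delta$ or changing the diameter.

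\textbf{Step 1 (reduction to two cases).} Label $\square=\{a,b,c,d\}$ and choose the labeling, after relabeling, so that $c$ and $d$ lie on the same side of $\overline{ab}$ (or on $\overline{ab}$; collinear configurations are handled as limiting or degenerate versions of the same argument). By \cref{Harvey2.8} we may assume $d$ is an interior point of $\angle abc$. By \cref{Harvey2.6}, the line $\overline{bd}$ then meets the segment $ac$ at a point $d'$. There are two cases.
\begin{itemize}
\item Case 1: $d$ is on the far side of $d'$ from $b$. Then $ac$ and $bd$ already intersect, and we take $\square'=\square$.
\item Case 2: $d$ lies on the segment $bd'$. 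Then $d$ lies in the closed triangle $abc$.
\end{itemize}

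\textbf{Step 2 (Case 2: choosing which vertex to push away from).} Here $d$ is inside triangle $abc$, so for each vertex $v\in\{a,b,c\}$ the ray from $v$ through $d$ exits the triangle at a point $d_v$ on the opposite side. Let $L$ be the largest pairwise distance sum; it has the form $|vd|+|uw|$ with $\{u,v,w\}=\{a,b,c\}$, and this determines $v$. Move $d$ along the ray from $v$ to $d_v$, by a distance $t\ge 0$.
\begin{itemize}
\item The distance $|vd|$ grows by exactly $t$, so $L$ increases by exactly $t$.
\item The other two sums contain $|ud|$ or $|wd|$, which are $1$-Lipschitz in the position of $d$. So each of them increases by at most $t$.
\end{itemize}
Hence the sum $|vd|+|uw|$ stays the largest, the middle sum increases by at most $t$, and $\delta=(L-M)/2$ does not decrease.

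\textbf{Step 3 (the diameter is preserved).} The main point to verify is that the diameter stays equal to $\ell$. In $\mathbb{H}^2$ the distance to a fixed point is a convex function along geodesics. Hence the distance from any point of the closed triangle $abc$ to a vertex is at most the largest side length. It follows that $\operatorname{diam}\{a,b,c\}=\ell$ is attained among $a,b,c$, both before and after moving $d$. The new quadruple $\square'=\{a,b,c,d_v\}$ therefore still has diameter $\ell$.

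\textbf{Step 4 (the crossing condition).} Since $d_v$ lies on the side $uw$ and the segment $vd_v$ ends there, the segments $uw$ and $vd_v$ intersect (at $d_v$). After relabeling, this is condition 1. Condition 2 holds by Step 2.

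\textbf{Expected obstacle.} The subtle part is the bookkeeping in Step 2: one must choose the pushing direction according to which sum is $L$, and the Lipschitz argument relies on the other sums not overtaking $L$. Also needed are the hyperbolic convexity fact in Step 3 and the careful treatment of degenerate collinear configurations.
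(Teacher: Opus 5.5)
Your proposal follows the paper's proof essentially step for step: the same reduction via \cref{Harvey2.8,Harvey2.6} to the case where $d$ lies inside triangle $abc$, the same choice of pushing direction according to which pair realizes $L$, and the same Lipschitz comparison showing that $L$ grows by exactly the displacement while the other sums, and hence $M$, grow by at most that amount. Your convexity argument for why the diameter is preserved usefully justifies a step the paper only asserts. The degenerate collinear cases need no limiting argument: if three of the points are collinear, the middle one lies on the segment joining the other two, so that segment already meets the segment to the fourth point.
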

\begin{proof}

Consider $\square \subset \mathbb{H}^2$ with diameter $\ell$. We first label $\square$ by $\{a,b,c,d\}$. If $a,b$ lie on different sides of the line $\overline{cd}$, and $c,d$ lie on different sides of the line $\overline{ab}$, then $ab$ and $cd$ intersect. Thus, we may set $\square' = \square$ and we are done. 

Therefore, we assume without loss of generality that $c$ and $d$ lie on the same side of the line $\overline{ab}$ as in \cref{fig:Lemma3.1}. By \cref{Harvey2.8} either $c$ is an interior point of $\angle abd$ or $d$ is an interior point of $\angle abc$. We can permute the labels of $c$ and $d$ without affecting our initial assumption. We may then, without loss of generality, assume that $d$ is an interior point of $\angle abc$. By \cref{Harvey2.6}, $\overline{bd}$ intersects $ac$ at a point $p$. 

If $b$ and $d$ do not lie on the same side of $p$, then $p$ must lie on $bd$ as in \cref{fig:Lemma3.2}. Thus, $p$ lies on both $ac$ and $bd$. Then $ac$ and $bd$ intersect, so we may set $\square' = \square$ and we are done. Suppose then that $b$ and $d$ lie on the same side of $p$ as in \cref{fig:Lemma3.3}. Then, since $d$ is an interior point of $\angle abc$, $d$ lies in the interior of the triangle $\triangle abc$. We may now permute the labels $a,b,c$ so that $L(\square) = |ac| + |bd|$. Since $d$ still lies in the interior of $\triangle abc$, it is still an interior point of the angle $\angle abc$. By \cref{Harvey2.6}, the line $bd$ intersects the line segment $ac$. Let $d'$ be the point of this intersection and set $\square' = \{a,b,c,d'\}$. Note that, since $d$ lies inside $\triangle abc$, the diameter of $\square'$ is unchanged from $\ell$. We conclude the proof by showing that $\delta(\square) \leq \delta(\square')$. 

Let
\[
S_1=|ab|+|cd|,\quad
S_2=|ac|+|bd|,\quad
S_3=|ad|+|bc|
\]
be the pairwise distance sums (see \cref{def:Def1}) of $\square$ and likewise define $S_1',S_2',S_3'$ for $\square'$. Let $\Delta = |dd'|$.  By the triangle inequality, $|cd'| \leq |cd| + |dd'| = |cd| + \Delta$. Then 
$|cd'| - |cd| \leq \Delta$. It follows that 
\[S_1' - S_1 = |ab| + |cd'| - |ab| - |cd| = |cd'| - |cd| \leq \Delta.\]
It similarly follows that 
\[S_3' - S_3 \leq \Delta.\] Meanwhile, we have \[S_2' - S_2 = |ac| + |bd'| - |ac| - |bd| = |bd'| - |bd| = |dd'| = \Delta.\]
Since $S_2 = L(\square)$, $S_1, S_3 \leq S_2$ and $S_1' - S_1, S_3' - S_3 \leq S_2' - S_2 = \Delta$ imply that $L(\square') = S_2'$, and thus
\[L(\square') = S_2' = S_2 + \Delta = L(\square)+\Delta.\]
Then, $M(\square')$ must be the larger of $S_1'$ and $S_3'$. We have \[M(\square') = \text{max}\{S_1',S_3'\} 
\leq \text{max}\{S_1+\Delta,S_3+\Delta\} \leq \text{max}\{S_1,S_3\} +\Delta = M(\square) + \Delta\]
It follows that $M(\square') - M(\square) \leq L(\square') - L(\square)$, which implies that \[\delta(\square) = \frac{L(\square) -M(\square)}{2} \leq  \frac{L(\square') -M(\square')}{2} = \delta(\square').\]

\end{proof}

\begin{lemma}\label{lem:MLemma}
For any quadruple $\square \subset \mathbb{H}^2$ satisfying condition 1 of \cref{lem:PLem1}, we have \[\delta(\square) \leq \delta(\square_{L(\square)/2}) \]
where $\square_{L(\square)/2}$ is given by setting $\ell = L(\square)/2$ in the family $\square_\ell$ (see \cref{fig:LQuad}). 
\end{lemma}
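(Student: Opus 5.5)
The plan is to reduce the claim to a lower bound on $M(\square)$ and obtain that bound from the triangles meeting at the intersection point of the diagonals. Label $\square=\{a,b,c,d\}$ as in condition 1 of \cref{lem:PLem1}, so that $ac$ and $bd$ meet at a point $p$.

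\textbf{Step 1: identify $L$.} Since $p$ lies on both segments, the triangle inequality gives
\[
|ab|+|cd|\le |ap|+|pb|+|cp|+|pd|=|ac|+|bd|,
\]
and the same bound holds for $|ad|+|bc|$. Hence $L(\square)=|ac|+|bd|$, and $M(\square)=\max\{|ab|+|cd|,\ |ad|+|bc|\}$.

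\textbf{Step 2: reduce to a bound on $M$.} Write $\ell=L(\square)/2$. As computed in the proof of the Main Theorem, $\delta(\square_\ell)=\ell-\gamma_{\pi/2}(\ell/2)$. Therefore the lemma is equivalent to
\[
M(\square)\ \ge\ 2\gamma_{\pi/2}\!\left(\tfrac{L(\square)}{4}\right).
\]

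\textbf{Step 3: bound each side by $\gamma_\theta$.} Set $\alpha=|pa|$, $\beta=|pb|$, $\kappa=|pc|$, $\mu=|pd|$, so that $\alpha+\beta+\kappa+\mu=L(\square)$. Let $\theta=\angle apb$. Since $a,p,c$ and $b,p,d$ are collinear with $p$ between, the vertical angle $\angle cpd$ also equals $\theta$, while $\angle bpc=\angle dpa=\pi-\theta$.

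Apply \cref{eq:HLaw2} to triangle $apb$, using $\cosh(\alpha-\beta)\ge 1$:
\[
\cosh|ab|\ \ge\ \cosh(\alpha+\beta)\frac{1-\cos\theta}{2}+\frac{1+\cos\theta}{2}.
\]
By \cref{eq:GLaw2}, the right-hand side is $\cosh\gamma_\theta\big(\tfrac{\alpha+\beta}{2}\big)$, so $|ab|\ge\gamma_\theta\big(\tfrac{\alpha+\beta}{2}\big)$. In the same way,
\[
|cd|\ge\gamma_\theta\big(\tfrac{\kappa+\mu}{2}\big),\qquad |bc|\ge\gamma_{\pi-\theta}\big(\tfrac{\beta+\kappa}{2}\big),\qquad |da|\ge\gamma_{\pi-\theta}\big(\tfrac{\mu+\alpha}{2}\big).
\]

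\textbf{Step 4: apply convexity.} By \cref{lem:TLemma1}, $\gamma_\theta$ is convex; by continuity at $0$, the midpoint inequality $\gamma_\theta(x)+\gamma_\theta(y)\ge 2\gamma_\theta\big(\tfrac{x+y}{2}\big)$ holds on $[0,\infty)$. This yields
\[
|ab|+|cd|\ge 2\gamma_\theta\big(\tfrac{L}{4}\big),\qquad |ad|+|bc|\ge 2\gamma_{\pi-\theta}\big(\tfrac{L}{4}\big).
\]

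\textbf{Step 5: choose the better pairing.} One of $\theta$ and $\pi-\theta$ is at least $\pi/2$. By \cref{eq:GLaw3}, $\gamma_\phi(x)=2\sinh^{-1}\!\big(\sin(\phi/2)\sinh x\big)$ is nondecreasing in $\phi\in[0,\pi]$. Hence
\[
M(\square)\ \ge\ \max\Big\{2\gamma_\theta\big(\tfrac{L}{4}\big),\,2\gamma_{\pi-\theta}\big(\tfrac{L}{4}\big)\Big\}\ \ge\ 2\gamma_{\pi/2}\big(\tfrac{L}{4}\big),
\]
which is exactly the bound needed in Step 2.

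\textbf{Main obstacle: degenerate configurations.} These are the cases where $p$ coincides with a vertex, or where the four points are collinear so that $\theta\in\{0,\pi\}$.
\begin{itemize}
\item \emph{Collinear points.} The law of cosines still holds with $\cos\theta=\pm1$, so the estimates of Steps 3--5 go through unchanged.
\item \emph{$p$ at a vertex.} Then one of $\alpha,\beta,\kappa,\mu$ is $0$ and the corresponding triangle is degenerate. The side bound still holds because $\gamma_\phi(x/2)=x$ exactly when one leg of the triangle is $0$. Alternatively, \cref{prop:0quad} handles the case where two points coincide.
\end{itemize}
So these cases need only a brief remark rather than a separate argument.
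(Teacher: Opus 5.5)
Your argument is correct and is essentially the paper's proof: $L=|ac|+|bd|$ via the triangle inequality at $p$, the bound $|ab|\ge\gamma_\theta\bigl(\tfrac{|pa|+|pb|}{2}\bigr)$ from \cref{eq:HLaw2} with $\cosh(\cdot)\ge 1$, midpoint convexity of $\gamma_\theta$, and comparison with $\gamma_{\pi/2}$. The only differences are cosmetic: you bound both pairings and take the larger, whereas the paper relabels $b,d$ so that $\angle apb\ge\pi/2$ and then compares using $\cos\angle apb\le 0$ in \cref{eq:GLaw1} instead of monotonicity of $\gamma_\phi$ in $\phi$. Your degenerate-case remark has two slips, though neither affects the main argument. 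The claim that $\gamma_\phi(x/2)=x$ when a leg vanishes is false, since equality holds only for $\phi=\pi$; what you need is $\gamma_\phi(x/2)\le x$, or simply that \cref{eq:HLaw2} stays valid with a zero leg. Also, \cref{prop:0quad} does not apply, because $p$ being a vertex does not make two points coincide.
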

\begin{proof}

Consider $\square \subset \mathbb{H}^2$ satisfying condition 1 of \cref{lem:PLem1}. Then $\square$ can be labeled $\{a,b,c,d\}$ such that $ac$ and $bd$ intersect at a point $p$. Let $p$ be the origin. Since $a$ and $c$ lie on the same line through the origin, as well as $b$ and $d$, we can give $\{a,b,c,d\}$ the polar coordinates
\[(r_a, \alpha), (r_b, \beta), (r_c, \alpha+\pi), (r_d, \beta + \pi).\]
It follows from the triangle inequality that $|ab|  \leq |ap| + |pb|$ and $|cd| \leq  |cp| + |pd|$. Since $|ac| = |ap| + |cp|$ and $|bd| = |pb| + |pd|$, we have 
\[|ab| + |cd| \leq  |ap| + |pb| + |cp| + |pd| = |ac| + |bd|.\]
It likewise follows that $|ad| + |bc| \leq |ac| + |bd|$. So $L(\square) = |ac| + |bd|$ and we have 
\begin{equation} \label{eq:RSum}
L(\square) = r_a + r_b + r_c + r_d.
\end{equation}
Note that $\angle apb + \angle apd = \pi$, so either $\angle apb \geq \pi/2$ or $\angle apd \geq \pi/2$. We may permute the labels of $b$ and $d$ without changing the fact that $ac$ intersects $bd$. So we may, without loss of generality, assume that 
 \begin{equation} \label{eq:BigAngle}
\angle apb = |\alpha - \beta| \geq \frac{\pi}{2}.
\end{equation}

By the definition of $\square_\ell$, we have $L(\square_{L(\square)/2})  = L(\square)$ (see \cref{fig:LQuad}). Thus, it remains to show that $M(\square) \geq M(\square_{L(\square)/2})$. We will show this via a chain of starred inequalities. Since $L(\square) = |ac| + |bd|$, it follows that
\begin{equation}\label{eq:eq1}
M(\square) \geq  |ab| + |cd|.
\end{equation}
We will now show that $|ab| \geq \gamma_{|\alpha - \beta|}((r_a+r_b)/2)$.
By our alternative form for the hyperbolic law of cosines, \cref{eq:HLaw2}, we have
\begin{align*}
|ab| &= \left|\left(r_a,\alpha\right),\left(r_b,\beta\right)\right| \\
&= \cosh^{-1}\left(\cosh \left(r_a+r_b\right) \frac{1-\cos |\alpha - \beta|}{2}  + \cosh \left( r_a-r_b \right) \frac{1+\cos |\alpha - \beta|}{2} \right).
\end{align*}
Because $\cosh(x) \geq 1$ and $\cosh^{-1}(x)$ is increasing for positive $x$, 
it follows that
\begin{align*}
& \cosh^{-1}\left(\cosh \left(r_a+r_b\right) \frac{1-\cos |\alpha - \beta|}{2}  + \cosh \left( r_a-r_b \right) \frac{1+\cos |\alpha - \beta|}{2} \right) \\
\geq{}& \cosh^{-1}\left(\cosh \left(r_a+r_b\right) \frac{1-\cos |\alpha - \beta|}{2} +  \frac{1+\cos |\alpha - \beta|}{2} \right)
 .
\end{align*}
By \cref{eq:GLaw2}, 
\[ \gamma_{|\alpha - \beta|}\left(\frac{r_a+r_b}{2}\right) = \cosh^{-1} \left(\cosh \left(r_a+r_b\right) \frac{1-\cos |\alpha - \beta|}{2} +  \frac{1+\cos |\alpha - \beta|}{2}\right) .\]
Thus $|ab|\geq \gamma_{|\alpha - \beta|}((r_a+r_b)/2)  $. We likewise have $ |cd| \geq \gamma_{|\alpha - \beta|}((r_c+r_d)/2) $. So
\begin{equation}\label{eq:eq2}
 |ab| + |cd| \geq \gamma_{|\alpha - \beta|}\left(\frac{r_a+r_b}{2}\right) + \gamma_{|\alpha - \beta|}\left(\frac{r_c+r_d}{2}\right).
\end{equation}
Recall from \cref{lem:TLemma1} that $\gamma_{|\alpha - \beta|}'(x), \gamma_{|\alpha - \beta|}''(x) \geq 0$ for $x \in (0,\infty)$. Thus, since $\gamma_{|\alpha - \beta|}$ is continuous on $\mathbb{R}$, it follows that $\gamma_{|\alpha - \beta|}$ is non-decreasing and convex on $[0,\infty)$. Therefore
\[\gamma_{|\alpha - \beta|}\left(\frac{\frac{r_a+r_b}{2} + \frac{r_c+r_d}{2} }{2}\right) \leq \frac{\gamma_{|\alpha - \beta|}(\frac{r_a+r_b}{2}) +  \gamma_{|\alpha - \beta|}(\frac{r_c+r_d}{2})}{2}.\]
By \cref{eq:RSum}, we have $r_a+r_b+r_c+r_d = L(\square)$. It follows that
\begin{equation}\label{eq:eq3}
\gamma_{|\alpha - \beta|}\left(\frac{r_a+r_b}{2}\right) +  \gamma_{|\alpha - \beta|}\left(\frac{r_c+r_d}{2}\right) \geq 2\gamma_{|\alpha-\beta|}\left(\frac{r_a+r_b+r_c+r_d}{4}\right) = 2\gamma_{|\alpha-\beta|}\left(\frac{L(\square)}{4}\right).
\end{equation}
We will now show that $2\gamma_{|\alpha-\beta|}(L(\square)/4) \geq M(\square_{L(\square)/2})$. 
By the definition of $\square_\ell$, we have $M(\square_{L(\square)/2}) = 2\gamma_{\pi/2}(L(\square)/4)$ (see \cref{fig:LQuad}). It follows that 
\[M(\square_{L(\square)/2}) = 2\gamma_{\pi/2}\left(\frac{L(\square)}{4}\right) = 2\cosh^{-1}\left(\cosh^2\left(\frac{L(\square)}{4}\right)\right).\]
By \cref{eq:BigAngle}, we have $|\alpha-\beta | \geq \pi/2$. It follows that $\cos |\alpha - \beta| \leq 0$. Thus, since $\cosh^{-1}(x)$ is increasing for positive $x$, we have 
\[
2\cosh^{-1}\left(\cosh^2\left(\frac{L(\square)}{4}\right)\right)  \leq 2\cosh^{-1}\left(\cosh^2\left(\frac{L(\square)}{4}\right) - \sinh^2\left( \frac{L(\square)}{4}\right) \cos |\alpha - \beta|\right) 
\]
By \cref{eq:GLaw1}, we have 
\[2\gamma_{|\alpha-\beta|}\left(\frac{L(\square)}{4}\right) = 2\cosh^{-1}\left(\cosh^2\left(\frac{L(\square)}{4}\right) - \sinh^2\left( \frac{L(\square)}{4}\right) \cos |\alpha - \beta|\right).\]
Thus, 
\begin{equation}\label{eq:eq4}
2\gamma_{|\alpha-\beta|}\left(\frac{L(\square)}{4}\right) \geq 2\cosh^{-1}\left(\cosh^2\left(\frac{L(\square)}{4}\right)\right)  = M(\square_{L(\square)/2}) .
\end{equation}
\cref{eq:eq1,eq:eq2,eq:eq3,eq:eq4} imply that $M(\square) \geq M(\square_{L(\square)/2})$, which completes the proof.

\end{proof}

\subsection{Gromov's $\delta$-Hyperbolic Space}\label{sec:3.2}

It was first shown in \cite{Nica2016} that $\delta = \ln 2$ is the minimum $\delta$ for which $\mathbb{H}^2$ is $\delta$-hyperbolic in the sense of Gromov. We give an alternative proof using $\Gamma_{\mathbb{H}^2}$:
It follows from \cref{ther:Main} that $\Gamma_{\mathbb{H}^2}$ is non-decreasing, so the limit of $\Gamma_{\mathbb{H}^2}$ as $x$ approaches infinity gives the supremum of its possible values. Then \cref{cor:CorA} implies that the minimum $\delta$ for which $\mathbb{H}^2$ is $\delta$-hyperbolic is given by
\[\delta = \sup\limits_{\square \subset \mathbb{H}^2} \delta(\square) =  \sup\limits_{x \in [0,\infty)} \sup\limits_{\square \subset \mathbb{H}^2 : D(\square) = x} \delta(\square) = \sup\limits_{x \in [0,\infty)} \Gamma_{\mathbb{H}^2}(x) = \lim\limits_{x \to \infty} \Gamma_{\mathbb{H}^2}(x) = \ln 2.\]

\CorA
\begin{proof}

Let $y = \cosh^{-1}\left(\cosh^2(x/2)\right)$. Applying the exponential definition of $\cosh$ to $\cosh (y) = \cosh^2(x/2)$ gives us
\[
e^{y} + e^{-y} = \frac{(e^{x/2} +  e^{-x/2} )^2}{2} 
= \frac{e^x + 2 + e^{-x}}{2} \]
and rearranging this equation gives
\[\begin{alignedat}{1}
y &=  \ln \frac{e^x + 2 + e^{-x} - 2e^{-y} }{2} \\
&=  \ln \frac{(e^{x})(1 + 2e^{-x} + e^{-2x} - 2e^{-(x+y)}) }{2} \\
&= x + \ln(1 + 2e^{-x} + e^{-2x} - 2e^{-(x+y)}) - \ln (2)
\end{alignedat}\]
By \cref{ther:Main} we have 
\begin{equation} 
\Gamma_{\mathbb{H}^2}(x) = x-y = \ln 2 - \ln(1 + 2e^{-x} + e^{-2x} - 2e^{-(x+y)}). 
\end{equation}
Since $y$ is always nonnegative (as $\cosh^{-1}$ is nonnegative), it follows that $e^{-x}, e^{-2x}, e^{-(x+y)}$ will converge to $0$. Therefore, 
\[
\lim\limits_{x \to \infty} \Gamma_{\mathbb{H}^2}(x) =  \ln 2 -  \lim\limits_{x \to \infty} \ln(1 + 2e^{-x} + e^{-2x} - 2e^{-(x+y)}) =  \ln 2 -  \lim\limits_{x \to \infty} \ln(1) = \ln 2.
\]

\end{proof}

\subsection{Scaled Gromov Four-Point Condition}\label{sec:3.3}

A framework of characterizing hyperbolicity in graphs was developed in \cite{Jonckheere2011} by taking the supremum over all quadruples $\square$ of $\delta(\square)$ rescaled by a scaling factor. One of the scaling factors they considered was the diameter (referred to as $D(\square)$ in our paper and $diam(\square)$ in \cite{Jonckheere2011}). The supremum in this case is $\sup_{\square \subset \mathbb{H}^2} \delta(\square)/D(\square)$. They give numerical but not theoretical results for the value of $\sup_{\square \subset \mathbb{H}^2} \delta(\square)/D(\square)$. We give a proof of this value using $\Gamma_{\mathbb{H}^2}$: It suffices to compute $\sup_{\square \subset \mathbb{H}^2} \delta(\square)/D(\square) = \sup_{x \in (0,\infty)} \Gamma_{\mathbb{H}^2}(x)/x$. Since $\Gamma_{\mathbb{H}^2}$ is concave on $(0,\infty)$ by \cref{ther:Main} and continuous at $0$, it is concave on its domain. Since we also have $\Gamma_{\mathbb{H}^2}(0) = 0$, it follows that $\Gamma_{\mathbb{H}^2}(tx) \geq t\Gamma_{\mathbb{H}^2}(x)$ for $t \in [0,1]$. Consider $x,x' \in (0,\infty)$ such that $x \leq x'$. Then $x/x' \in [0,1]$ so that 
\[\Gamma_{\mathbb{H}^2}(x) = \Gamma_{\mathbb{H}^2}\left(\frac{x}{x'}x'\right) \geq \frac{x}{x'} \Gamma_{\mathbb{H}^2}\left(x'\right).\]
Which implies that $\Gamma_{\mathbb{H}^2}(x)/x \geq \Gamma_{\mathbb{H}^2}(x')/x'$. Thus, $\Gamma_{\mathbb{H}^2}(x)/x$ is non-increasing. So the supremum $\sup_{x \in (0,\infty)} \Gamma_{\mathbb{H}^2}(x)/x$ may be obtained by taking the limit as $x$ approaches $0$:
\[\sup_{\square \subset \mathbb{H}^2} \delta(\square)/D(\square) = \sup_{x \in (0,\infty)} \Gamma_{\mathbb{H}^2}(x)/x = \lim_{x \to 0^+} \Gamma_{\mathbb{H}^2}(x)/x.\]
By L'H\^{o}pital's rule, and then applying \cref{cor:Cor2}, we have
\[\sup_{\square \subset \mathbb{H}^2} \delta(\square)/D(\square) = \lim_{x \to 0^+} \Gamma_{\mathbb{H}^2}(x)/x = \lim_{x \to 0^+} \Gamma_{\mathbb{H}^2}'(x)/1 = \frac{\sqrt{2}-1}{\sqrt{2}}.\]

\CorB

\begin{proof}
By \cref{ther:Main} and \cref{eq:GLaw1} we have \[\Gamma_{\mathbb{H}^2}(x)  = x - \cosh^{-1}\left(\cosh^2\left(\frac{x}{2}\right)\right) = x - \gamma_{\pi/2}\left(\frac{x}{2}\right).\]
Recall from \cref{lem:TLemma2} that $\lim\limits_{x \to 0^+} \gamma'_{\pi/2}(x) = \sqrt{2}$. It follows that
\[\lim\limits_{x \to 0^+} \Gamma_{\mathbb{H}^2}'(x)  = \lim\limits_{x \to 0^+} \left(1 - \frac{\gamma'_{\pi/2}\left(\frac{x}{2}\right)}{2} \right)=  1 - \lim\limits_{x \to 0^+} \frac{\gamma'_{\pi/2}\left(\frac{x}{2}\right)}{2} = 1 - \frac{\sqrt{2}}{2} = \frac{\sqrt{2}-1}{\sqrt{2}} \]
\end{proof}

\section{Graphs}
We will now consider $\Gamma_G$ when $G$ is a graph (connected, weighted, finite, and equipped with the path metric). We will first examine the behavior of $\Gamma_G$ on graphs that possess maximally hyperbolic and minimally hyperbolic (Euclidean) properties. We will then develop a method of characterizing the hyperbolicity of a graph.

\subsection{Model Graphs}\label{sec:mod}

Let $T$ be a tree (a graph with no cycles). It is generally understood that trees can be considered as models of maximal hyperbolicity \cite{Gromov1987}. Moreover, it is also known that $\delta(\square)$ vanishes for any quadruples $\square \subset T$ \cite{Gromov1987}. Then $\Gamma_{T}(x) = 0$ for all $x \in \text{Dist}_T$.

We now consider the lattice graph $L_n$ (see \cref{fig:Lattice}). It is generally understood that lattice graphs model quasi-Euclidean geometry. Let $L_n$ be the set $\{0,...,n\} \times \{0,...,n\}$ with the metric $d((x_1,y_1),(x_2,y_2)) = |x_1 - x_2| + |y_1 - y_2|$. The diameter $D(L_n)$ is given by $d\big((0,0),(n,n)\big) = |n - 0| + |n - 0| = 2n$. The set of distances in $L_n$ is $\{0,...,2n\}$. We can find an explicit formula for $\Gamma_{L_n} : \{0,...,2n\} \to \mathbb{R}$. 
\begin{proposition} \label{prop:EGraph}
Let $L_n$ be a lattice graph. Then 
\[\Gamma_{L_n}(x) = \lfloor x/2 \rfloor\]
\end{proposition}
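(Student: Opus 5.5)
The plan is to prove matching upper and lower bounds. The upper bound comes from combining \cref{prop:maxg} with a parity argument. The lower bound comes from explicit quadruples in the lattice.

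\emph{Upper bound.} By \cref{prop:maxg}, every quadruple $\square \subset L_n$ with $D(\square) = x$ satisfies $\delta(\square) \leq x/2$. To improve this to $\lfloor x/2 \rfloor$, I will show that $\delta(\square)$ is always an integer in $L_n$, so that $\delta(\square) \leq x/2$ forces $\delta(\square) \leq \lfloor x/2 \rfloor$.

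For integrality, color each vertex $(u,v)$ by the parity of $u+v$. Since every edge changes $u+v$ by exactly one, $d(p,q) \equiv (u_p+v_p) + (u_q+v_q) \pmod 2$. Hence each of the three pairwise distance sums of a quadruple $\{a,b,c,d\}$ is congruent modulo $2$ to the sum of the parities of all four vertices. So $S$, $M$ and $L$ share a parity, $L-M$ is even, and $\delta(\square) = (L-M)/2 \in \mathbb{Z}$.

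\emph{Lower bound.} For each $x \in \{0,\dots,2n\}$ I will exhibit a quadruple of diameter exactly $x$ with $\delta = \lfloor x/2 \rfloor$. The case $x = 0$ is trivial, since $\Gamma_{L_n}(0) = 0$ via a degenerate quadruple.

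For even $x = 2k$ with $1 \leq k \leq n$, take the square
\[a=(0,0),\quad b=(k,0),\quad c=(k,k),\quad d=(0,k).\]
The diagonals give $|ac| = |bd| = 2k$ and the four sides have length $k$. Thus $D = 2k$, $L = 4k$, $M = S = 2k$, and $\delta = k$.

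For odd $x = 2k+1$, we have $k \leq n-1$ because $x \leq 2n$. Take
\[a=(0,0),\quad b=(k+1,0),\quad c=(k+1,k),\quad d=(0,k),\]
which lies in $L_n$. Then $|ac| = |bd| = 2k+1$, $|ab| = |cd| = k+1$ and $|ad| = |bc| = k$. This gives $D = 2k+1$, $L = 4k+2$, $M = 2k+2$, and $\delta = k = \lfloor x/2 \rfloor$.

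There is no real obstacle here. The only step needing care is the parity observation, which is what excludes $\delta = x/2$ for odd $x$. It is also worth checking that the odd construction stays inside the grid, which holds because $x \leq 2n$.
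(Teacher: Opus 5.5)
Your proof is correct and takes essentially the paper's route: the upper bound comes from integrality of $\delta$ (the three pairwise distance sums share a parity) combined with \cref{prop:maxg}, and the lower bound comes from explicit square and rectangle quadruples. Your odd-case rectangle $(0,0),(k+1,0),(k+1,k),(0,k)$ is in fact the better witness: the paper's choice $(0,0),(0,m),(m,0),(m,m-1)$ for $x=2m-1$ has $d\big((0,m),(m,0)\big)=2m$, so its diameter is $x+1$ rather than $x$, whereas yours genuinely has diameter $2k+1$ and $\delta=k$.
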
 
\begin{proof}

Let $x \in \{0,...,2n\}$. We will first give an upper bound on $\Gamma_{L_n}(x)$. We start by showing that $\Gamma_{L_n}(x)$ is always an integer. Consider a quadruple $\{a,b,c,d\}$  in $L_n$ with coordinates $\{(x_a,y_a),(x_b,y_b),(x_c,y_c),(x_d,y_d)\}$. Note that negation and absolute value do not affect parity. So 
\[d(a,b) = |x_a - x_b| + |y_a - y_b| \equiv x_a + y_a + x_b + y_b \pmod{2}.\]
It follows that
\[d(a,b) + d(c,d) \equiv x_a + y_a + x_b + y_b + x_c + y_c + x_d + y_d\pmod{2}.\] 
We likewise have 
\[d(a,c) + d(b,d),d(a,d) + d(b,c) \equiv x_a + y_a + x_b + y_b + x_c + y_c + x_d + y_d\pmod{2}.\] 
Thus, the pairwise distance sums (see \cref{def:Def1}) all have the same parity. Then $L - M$ must be even, so $\delta = (L-M)/2$ is always an integer. This together with \cref{prop:maxg} implies that $\Gamma_{L_n}(x)$ is bounded above by the largest integer $N$ such that $N \leq x/2$. This upper bound explicitly is 
\[\Gamma_{L_n}(x) \leq N = \begin{cases}
\frac{x}{2} & \text{if $x$ is even} \\
\frac{x-1}{2} & \text{if $x$ is odd}
\end{cases} = \lfloor x/2 \rfloor \]

We now give a lower bound for $\Gamma_{L_n}(x)$.  \\
\textit{Case I:} Suppose $x = 2m, m \in \mathbb{Z}$. Consider the four points $(0,0), (0,m), (m, 0), (m, m)$ in $L_n$. The pairwise distance sums are $L = 4m, M = 2m, S = 2m$. Thus, 
\[\Gamma_{L_n}(x) \geq \frac{L-M}{2} = \frac{4m-2m}{2} = m  = \frac{x}{2}.\] 
\textit{Case II:} Suppose $x = 2m -1, m \in \mathbb{Z}$. Consider the four points $(0,0), (0,m), (m, 0), (m, m-1)$ in $L_n$. The pairwise distance sums are $L = 4m-1, M = 2m+1, S = 2m-1$. Thus, 
\[\Gamma_{L_n}(x) \geq \frac{L-M}{2} = \frac{4m-1-2m-1}{2} = \frac{2m-2}{2} = \frac{x-1}{2}.\]

\end{proof}

\subsection{Measuring Hyperbolicity in Graphs}\label{sec:def}

Recall our two model graphs: tree graphs, which model maximal hyperbolicity, and lattice graphs, which model minimal hyperbolicity (quasi-Euclidean geometry). The hyperbolicity of any graph $G$ should fall on a spectrum between these two extremes. Our goal is to characterize where along this spectrum $G$ falls. We do so as follows: \MDef
Recall from \cref{sec:mod}, that $\Gamma_T(x) = 0$ for a tree $T$. It follows that $s(T) = 0$. On the other hand, we have $\Gamma_{L_n}(x) = \lfloor x/2 \rfloor$ for a lattice graph $L_n$. It follows that 
\[s(L_n) = \frac{4\sum_{i=0}^{2n} \lfloor i/2 \rfloor}{2n(2n+1)} = \frac{4n^2}{2n(2n+1)} =   \frac{2n}{2n+1}.\]
This will converge to $1$ as $n$ increases. Moreover, \cref{prop:maxg} implies that $\Gamma_G(x) \leq x/2$ for any graph $G$. Thus, $s(G)$ will always lie in the interval $[0,1]$, between the two extremes of $s(T) = 0$ and $\lim_{n \to \infty} s(L_n) = 1$. We propose that the closer to $0$ that $s(G)$ lies in this interval, the greater the hyperbolicity $G$ is. 

We will motivate this by considering an analogue construction on the model space $\mathcal{M}^2_{-1/k^2}$, the unique complete, simply connected, Riemannian 2-manifold (surface) of constant sectional curvature $-1/k^2$. Note that $\mathcal{M}^2_{-1/k^2}$ is obtained from the rescaling the Riemannian metric on $\mathbb{H}^2$. Moreover, the distance metric induced by the Riemannian metric on $\mathcal{M}^2_{-1/k^2}$ is a constant multiple of the distance metric on $\mathbb{H}^2$: $d_{\mathcal{M}^2_{-1/k^2}}(a,b) = kd_{\mathbb{H}^2}(a,b)$ (see Chapters 2 and 6 of \cite{BridsonHaefliger1999}). It follows from the definition of $\Gamma$, $\delta(\square)$ and $D(\square)$ that $\Gamma_{\mathcal{M}^2_{-1/k^2}}(x) = k \Gamma_{\mathbb{H}^2}(x/k)$. 

Now consider $\Gamma_{\mathcal{M}^2_{-1/k^2}}|_{[0,1]}$. This represents the $\Gamma$ function associated to a bounded subset of $\mathcal{M}^2_{-1/k^2}$. This is done to mirror the bounded diameter of a graph. Note that $s(G)$ can be viewed as a discrete analogue to area under the curve $\Gamma_G$, rescaled so that $s(G)$ lies in $[0,1]$. For $k \in (0,\infty)$, let $F(k)$ be the area under the curve of $\mathbb{H}^2_k$ over $[0,1]$. This is analogous to $s(G)$ but without the rescaling. In \cref{prop:Fprop}, we will show that $F(k)$ is non-decreasing. So the smaller $F(k)$ is, the smaller $k$ is (which corresponds to increasingly negative curvature and therefore greater hyperbolicity). Since $F(k)$ was constructed by analogy with $s(G)$, we propose that a smaller value of $s(G)$ indicates a greater degree of hyperbolicity in $G$.

\begin{proposition}\label{prop:Fprop}
$F(k)$ is non-decreasing: we have 
\[F(k) \leq F(k')\]
for $k,k' \in (0,\infty)$ such that $k \leq k'$.
\end{proposition}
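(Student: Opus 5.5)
Write $F(k)=\int_0^1 \Gamma_{\mathcal{M}^2_{-1/k^2}}(x)\,dx=\int_0^1 k\,\Gamma_{\mathbb{H}^2}(x/k)\,dx$, using the scaling relation $\Gamma_{\mathcal{M}^2_{-1/k^2}}(x)=k\Gamma_{\mathbb{H}^2}(x/k)$ derived just above. The plan is to prove the pointwise monotonicity that for each fixed $x\in[0,1]$ the function $k\mapsto k\,\Gamma_{\mathbb{H}^2}(x/k)$ is non-decreasing on $(0,\infty)$. Integrating this inequality over $[0,1]$ then gives $F(k)\le F(k')$ for $k\le k'$, since both integrands are continuous (hence integrable) and the inequality between them holds for every $x$.

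For the pointwise claim I will use the concavity part of \cref{ther:Main}, exactly as in \cref{sec:3.3}. There it was shown that $\Gamma_{\mathbb{H}^2}$ is concave on $[0,\infty)$: it is concave on $(0,\infty)$ by \cref{ther:Main} and continuous at $0$. It also satisfies $\Gamma_{\mathbb{H}^2}(0)=0$. From these two facts, $\Gamma_{\mathbb{H}^2}(y)/y$ is non-increasing in $y\in(0,\infty)$.

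Now fix $x\in(0,1]$ and $k\le k'$, and set $y=x/k\ge y'=x/k'$. Then
\[k\,\Gamma_{\mathbb{H}^2}(x/k)=x\,\frac{\Gamma_{\mathbb{H}^2}(y)}{y}\le x\,\frac{\Gamma_{\mathbb{H}^2}(y')}{y'}=k'\,\Gamma_{\mathbb{H}^2}(x/k').\]
The case $x=0$ is trivial, since both sides vanish.

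As a sanity check I could instead differentiate: $\frac{d}{dk}\big[k\Gamma(x/k)\big]=\Gamma(y)-y\Gamma'(y)$ with $y=x/k$. This quantity is nonnegative because concavity with $\Gamma(0)=0$ gives $\Gamma(0)\le\Gamma(y)+\Gamma'(y)(0-y)$. The difference-quotient argument above avoids the differentiability issue at the endpoint, so I would present that one.

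The main obstacle is really only bookkeeping. I need to justify that $\Gamma_{\mathbb{H}^2}(y)/y$ is non-increasing on all of $(0,\infty)$. This is already done in \cref{sec:3.3}, so I would simply cite that argument. I also need to make sure $F(k)$ is well-defined, which follows from continuity of the explicit formula in \cref{ther:Main}. No new geometry is required.
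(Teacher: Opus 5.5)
Your proposal is correct and is essentially the paper's proof. The paper applies $\Gamma_{\mathbb{H}^2}(tx)\ge t\,\Gamma_{\mathbb{H}^2}(x)$ (from concavity and $\Gamma_{\mathbb{H}^2}(0)=0$) with $t=k/k'$ to get the pointwise inequality $k\,\Gamma_{\mathbb{H}^2}(x/k)\le k'\,\Gamma_{\mathbb{H}^2}(x/k')$ and then integrates over $[0,1]$, which is exactly your argument phrased via the equivalent monotonicity of $\Gamma_{\mathbb{H}^2}(y)/y$.
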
 
\begin{proof}

Recall that $\Gamma_{\mathcal{M}^2_{-1/k^2}}(x) = k \Gamma_{\mathbb{H}^2}(x/k)$. 
It remains to show that 
\[F(k) = \int_{0}^1 \Gamma_{\mathcal{M}^2_{-1/k^2}}(t) dt = 
\int_{0}^1 k\Gamma_{\mathbb{H}^2}(t/k) dt \leq \int_{0}^1 k'\Gamma_{\mathbb{H}^2}(t/k') dt = F(k').\]
Since $\Gamma_{\mathbb{H}^2}$ is concave on $(0,\infty)$ by \cref{ther:Main} and continuous at $0$, it is concave on its domain. Since we also have $\Gamma_{\mathbb{H}^2}(0) = 0$, it follows that $\Gamma_{\mathbb{H}^2}(tx) \geq t\Gamma_{\mathbb{H}^2}(x)$ for $t \in [0,1]$. We have by construction that $k/k' \in [0,1]$. Then 
\[\Gamma_{\mathbb{H}^2}\left(\frac{x}{k'}\right) = \Gamma_{\mathbb{H}^2}\left(\frac{k}{k'}\frac{x}{k}\right) \geq \frac{k}{k'}\Gamma_{\mathbb{H}^2}\left(\frac{x}{k}\right).\] Which implies that $k \Gamma_{\mathbb{H}^2}(x/k) \leq k' \Gamma_{\mathbb{H}^2}(x/k')$. Integrating both sides of the inequality with respect to $x$ gives
$\int_{0}^1 k\Gamma_{\mathbb{H}^2}(t/k) dt \leq \int_{0}^1 k'\Gamma_{\mathbb{H}^2}(t/k') dt$, which completes the proof. 
\end{proof}

\section*{Acknowledgements}
This work is dedicated to the memory of Professor Ilya Zaliapin, whose mentorship inspired this research and whose support made it possible.

The author is especially grateful to Professor Tin-Yau Tam for his guidance and encouragement.

This research was supported by a Mission Support and Test Services, LLC (MSTS) Scholarship awarded to the author. He was also supported by NSF Grant No. 2122191 and the Ilya Zaliapin Student Conference Travel Award for FY 2026.

\printbibliography

\end{document}